\documentclass[dvipdfmx]{article}

\usepackage{amssymb,amsfonts,amsmath,amsthm,xcolor} 
\usepackage{latexsym,amscd}
\usepackage{url}
\usepackage{mathrsfs}
\usepackage[dvipdfmx]{graphicx}
\usepackage{cases}
\usepackage{diagbox}
\usepackage{tikz}

\textwidth=16.0 true cm
\textheight=22 true cm
\voffset=-2.0 true cm
\hoffset = -1.7 true cm

\newtheorem{theorem}{Theorem}[section]
\newtheorem{lemma}[theorem]{Lemma}
\newtheorem{corollary}[theorem]{Corollary}
\newtheorem{proposition}[theorem]{Proposition}

\newtheorem{definition}[theorem]{Definition}

\newtheorem{remark}[theorem]{Remark}

\setlength\arraycolsep{1pt}

\numberwithin{equation}{section}

\def\a{\alpha}
\def\RR{\mathbb{R}}
\def\CC{\mathbb C}

\def\NN{\mathbb N}
\def\ZZ{\mathbb Z}

\def\R{\Re\mathfrak{e} \,}

\def\de{\delta}

\def\d{{\rm d}}

\def\vect#1{\mbox{\boldmath $#1$}} 
\def\absn#1{|#1|_n} 

\def\mutant{{non-congruent homometric space} }




\begin{document}

\title{Distinguishing regular polygons, cycle graphs, and circular metric spaces by the distance multiset and magnitude}

\author{Hiroki Kodama and 
Jun O'Hara\footnote{Supported by JSPS KAKENHI Grant Number 23K03083.}}
\maketitle

\begin{abstract}
We investigate how effectively finite metric spaces can be distinguished by distance-based invariants. As model spaces, we consider regular polygons, cycle graphs, and their generalization, circular metric spaces, and as invariants we consider the distance multiset, magnitude, and magnitude homology. We construct explicit families of homometric but non-congruent circular metric spaces, and in many even cases these examples also have the same magnitude as the original space. We prove that regular polygons are determined by the distance multiset among planar metric spaces, but not in general. We also determine, for several values of $n$, whether regular $n$-gons and $n$-cycle graphs are determined by magnitude.
\end{abstract}

\medskip{\small {\it Keywords:} distance geometry, uDGP, homometric, magnitude, magnitude homology, regular polygon, cycle graph} 

{\small 2020 {\it Mathematics Subject Classification:} 51K99, 51F99}

\setcounter{tocdepth}{3}

\section{Introduction}

We investigate how effectively finite metric spaces can be distinguished by distance-based invariants. As model spaces, we consider regular polygons, cycle graphs, and their generalization, which we call circular spaces. As invariants, we focus on the multiset of pairwise distances and the magnitude. Our main objective is to compare the distinguishing power of these invariants by constructing explicit families of non-congruent spaces that they fail to distinguish.

The problem of reconstructing a finite metric space from the multiset of pairwise distances is known as the Unassigned Distance Geometry Problem (uDGP), or the homometric problem, and is a central topic in distance geometry (see Rosenblatt and Seymour \cite{RS}, Boutin and Kemper \cite{BK}). For specific classes of spaces, Skiena and collaborators (\cite{LSS}) studied the uDGP on the real line (the turnpike problem) and on the circle (the beltway problem). Our work fits naturally into this line of research.

The choice of regular polygons and cycle graphs is motivated by their role as discrete analogues of classical problems in geometry concerning the extent to which a space is determined by invariants. In convex geometry, Blaschke asked whether a convex body is determined by integrals of powers of chord lengths (the so-called moment integrals). In dimension two, objects with maximal symmetry, such as the disk, can be identified using the isoperimetric inequality. On the other hand, counterexamples arise by combining intermediate levels of symmetry, as in the examples of Mallows-Clark (\cite{MC}) and, in the non-convex setting, Caelli (\cite{Cae}). At the opposite extreme, Waksman showed that in the generic case (minimal symmetry), the space can be reconstructed (\cite{W}).

A regular polygon has high symmetry when its vertices are constrained to lie in the plane, but this is no longer the case once the ambient space is not restricted, and it is not generic either. In fact, whether a regular polygon is determined by the distance multiset depends on whether or not one imposes the planar constraint. Cycle graphs arise as natural counterparts of regular polygons when one restricts attention to graph metric spaces.

In addition to the distance multiset, we consider magnitude as a further invariant. The magnitude  is a kind of generalization of cardinality, where the count of elements is weighted by their mutual proximity. The precise definition will be given in Section 4. For finite metric spaces, the distance multiset is equivalent to the discrete Riesz energy. The authors' original motivation comes from the study of the Riesz energy function (also known as the Brylinski beta function) of manifolds and its residues. Magnitude was introduced by Leinster (\cite{L13}) as an application of category theory to finite metric spaces and was extended to manifolds by Meckes (\cite{Me}). Gimperlein, Goffeng and Louca (\cite{GGL,GG}) later established a connection between magnitude and Riesz energy via a certain operator. Moreover, the first few coefficients in the asymptotic expansion of the magnitude of a manifold essentially coincide with the first few residues of the Riesz energy function, and both satisfy an inclusion-exclusion principle. Further details are given in Remark \ref{rem_Riesz_mag}.

Throughout the paper, we assume that the number of points $n$ in a finite metric space is at least four.

In Section 2, we construct, for circular spaces, explicit families of non-congruent spaces that are homometric to them. In particular, when $n\ge6$ is even, we construct such spaces so that the multisets of edge lengths incident to each vertex coincide with those of the original circular space; in this case, the magnitude also agrees. For regular $n$-gons, we further show that when $n\ge9$ is odd, one can construct homometric non-congruent spaces with the same property.

In Section 3, we study the reconstruction problem from the distance multiset. By the results of Section 2, circular spaces cannot in general be reconstructed from this invariant. However, under the additional assumption that all points lie in a common plane, we prove that regular polygons are determined by the distance multiset. Furthermore, when this assumption is removed, we show that for $n=4,5,6,7$, there exist spaces that are homometric but not congruent to a regular polygon and that admit isometric embeddings into Euclidean space.

In Section 4, we investigate the distinguishing power of magnitude and magnitude homology. By the results of Section 2, when $n\ge6$ is even, neither regular $n$-gons nor $n$-cycle graphs are determined by magnitude. On the other hand, we show that regular $n$-gons are determined by magnitude only for $n=4,5,7$, and that $n$-cycle graphs are determined by magnitude only for $n=4,5$. Since magnitude can be interpreted as the Euler characteristic of magnitude homology, the latter is in principle a stronger invariant. Nevertheless, we prove that for regular $n$-gons, magnitude homology still fails to distinguish the space when $n=6$ or $n\ge8$.

\smallskip
We use the standard metric for the Euclidean spaces. 
By the regular polygon, which we denote by $R_n$,  we mean the set of vertices of the {\sl unit} regular polygon, i.e., the regular polygon with side length $1$. 
We denote the cycle graph with $n$ vertices by $C_n$. 

\smallskip
Acknowledgement: The authors thank deeply Yasuhiko Asao for the fruitful suggestions.

\section{Construction of homometric spaces}

Two finite metric spaces are called to be {\em homometric} if their distance multisets coincide. 
We give a way to construct non-congruent homometric spaces of regular polygons and cycle graphs. 

\begin{definition} \rm 
Define $|i|_n$ $(i\in\ZZ, \,n\in\NN)$ by $\displaystyle |i|_n=\min_{l\in\ZZ}|i-ln|$. 
\end{definition}

Note that $|\,\cdot\,|_n$ satisfies 
\begin{equation}\label{fcti}
\absn{i+ln}=\absn{-i}=\absn{i} \quad (l\in\ZZ).
\end{equation}

\begin{definition}\rm 
We say that a metric space $X=\{P_0,\dots,P_{n-1}\}$ is {\em circular of type} $(d_1,\dots,d_{\lfloor n/2\rfloor})$ $(0<d_1<\dots<d_{\lfloor n/2\rfloor})$ if $d(P_i,P_j)=d_{\absn{j-i}}$ for any $i,j$, where we put $d_0=0$. 
\end{definition}

Remark that the dihedral group $D_{n}$ acts isometrically on a circular space with $n$ vertices. 
The triangle inequality for $\triangle P_0 P_{i} P_{i+j}$, where the subscripts are considered modulo $n$, gives the ``{\em $n$-circular triangle inequality}''
\begin{equation}\label{triangle_inequality_delta}
d_{\absn{i}}+d_{\absn{j}}\ge d_{\absn{i+j}} \quad (\forall i,j).
\end{equation}

The cycle graph with $n$ vertices, $C_n$, is circular of type $(1,2,\dots,\lfloor n/2\rfloor)$, and the regular $n$-gon $R_n$ is circular of type $(\de_1, \dots, \de_{\lfloor n/2\rfloor})$, where 
$\de_i$ are given by 
\begin{equation}\label{delta}
\de_i=\frac{\sin({\pi i}/n)}{\sin({\pi}/n)}.
\end{equation}
The equality in the circular triangle inequality \eqref{triangle_inequality_delta} for $\de_i$ holds only in the trivial cases, $\absn{i}=0$ or $\absn{j}=0$. 

\begin{lemma}\label{lem_small_m-gon}
Suppose $0<d_1<\dots<d_{\lfloor n/2\rfloor}$ and $d_i$ satisfy the {$n$-circular triangle inequality} \eqref{triangle_inequality_delta}. 
{If $m<n$ then $d_1,\dots,d_{\lfloor m/2\rfloor}$ satisfy the $m$-circular triangle inequality. }
\end{lemma}

\begin{proof}
Since $n>m$, it holds that 
\[\absn{\a}=\min\{|\a|,n-|\a|\}\ge\min\{|\a|,m-|\a|\}=|\a|_m \quad (0\le\a\le m).\] 
Put $i'=j-i$ and $j'=k-j$. 
We may assume, by renumbering if necessary, that $0< i',j'\le \lfloor m/2\rfloor$. 
Then $d_{i'}+d_{j'}=d_{|i'|_n}+d_{|j'|_n}
\ge d_{|i'+j'|_n}\ge d_{|i'+j'|_m}$. 
\end{proof}

\begin{theorem}\label{thm_homometric}
A circular space $X$ has a non-congruent homometric space $X'$ if $\#X$  is greater than $3$. 
\end{theorem}

\begin{proof}
Put $n=\#X$. 
Suppose $X$ is circular of type $(d_1,\dots,d_{\lfloor n/2\rfloor})$, where $0<d_1<\dots<d_{\lfloor n/2\rfloor}$. 

When $n=4$, put $X'=\{A_1,A_2,A_3,A_4\}$ with 
$A_1A_2=A_1A_3=A_1A_4=A_3A_4=d_1$ and $A_2A_3=A_2A_4=d_2$ 
(Figure \ref{four_pts}, the right). 

\begin{figure}[htbp]
\begin{center}
\begin{minipage}{.45\linewidth}
\begin{center}
\includegraphics[width=4cm]{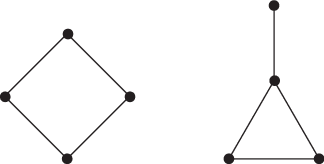}
\caption{Length $d_1$ edges}
\label{four_pts}
\end{center}
\end{minipage}
\hskip 0.4cm
\begin{minipage}{.45\linewidth}
\begin{center}
\includegraphics[width=4.8cm]{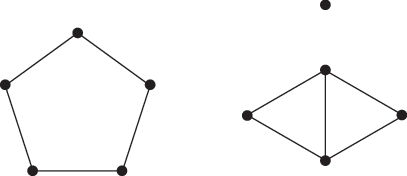}
\caption{Length $d_1$ edges}
\label{five_pts_bis}
\end{center}
\end{minipage}
\end{center}
\end{figure}

When $n=5$, put $X'=\{A_1,\dots, A_5\}$ with  
$A_2A_3=A_2A_4=A_2A_5=A_3A_4=A_3A_5=d_1$ and $A_1A_2=A_1A_3=A_1A_4=A_1A_5=A_4A_5=d_2$ 
(Figure \ref{five_pts_bis}).

\medskip
In the following, $d(A_i,B_j)$, etc. will be abbreviated as $A_iB_j$, etc. when there is no risk of confusion. 

\smallskip
(1) Assume $n=4k+1$ $(k\ge2)$. 
A $4k+1$-gon has $4k+1$ edges of length $d_1, d_2, \dots, d_{2k}$. 
Consider a $2k$-gon $\Delta_A=A_1\dots A_{2k}$ and a $2k+1$-gon $\Delta_B=B_1\dots B_{2k+1}$ with 
\[
A_iA_j=d_{|j-i|_{2k}}, \quad B_iB_j=d_{|j-i|_{2k+1}}.
\]
Then we have $4k+1$ edges of length $d_1, \dots, d_{k-1}$ and $3k+1$ edges of length $d_{k}$ so far. 
Put 
\[
A_iB_j=\left\{
\begin{array}{ll}
d_{k+j} &\>\>\mbox{ if }\>\> 1\le j\le k, \\[1mm]
d_{2k} &\>\>\mbox{ if }\>\> j=k+1 \>\>\mbox{ or }\>\>(i,j)=(2k,k+2), \\[1mm]
d_{2k-l} \>\> (1\le l\le k-1) &\>\>\mbox{ if }\>\>  j=k+l+1 \>\>\mbox{ and }\>\> i\le 2k-l, \>\>\mbox{ or } \\[1mm] 
 & \>\>\mbox{ \phantom{if} }\>\> j=k+l+2 \>\>\mbox{ and }\>\> i\ge 2k-l, \\[1mm]
d_k  &\>\>\mbox{ if }\>\> j=2k+1 \>\>\mbox{ and }\>\> i\le k
\end{array}
\right.
\]
(Table \ref{dAB4k+1}), and $X''=\Delta_A\cup\Delta_B$. 
\begin{table}[htbp]
  \centering
  \begin{tabular}{|c||c|c|c|c|c|c|c|c|} \hline
    \diagbox{$B_j$}{$A_i$} & $1$ & \phantom{k+1} & $k$ & $k+1$ & $k+2$ & \phantom{k+1} & $2k-1$ & $2k$ \\ \hline\hline
    $1$ & $k+1$ & $\cdots$ & $\cdots$ & $\cdots$ & $\cdots$ & $\cdots$ & $\cdots$ & $k+1$ \\ \hline
    \vdots  &  \vdots   &  &  &  &  &  &  & \vdots \\ \hline
    $k$ & $2k$  & $\cdots$ & $\cdots$ & $\cdots$ & $\cdots$ & $\cdots$ & $\cdots$ & $2k$ \\ \hline
  $k+1$ & $2k$  & $\cdots$ & $\cdots$ & $\cdots$ & $\cdots$ & $\cdots$ & $\cdots$ & $2k$ \\ \hline
  $k+2$ & $2k-1$ &  $\cdots$ & $\cdots$ & $\cdots$ & $\cdots$ & $\cdots$ & $2k-1$ & $2k$ \\ \hline
  $k+3$ & $2k-2$ & $\cdots$ & $\cdots$ & $\cdots$ & $\cdots$ & $2k-2$ & $2k-1$ & $2k-1$ \\ \hline
  \vdots & \vdots &  &  &  & \mbox{\rotatebox{45}{$\cdots$}} &  &  & \vdots \\ \hline
   $2k$ & $k+1$ & $\cdots$ & $k+1$ & $k+1$ & $k+2$ & $\cdots$ & $\cdots$ & $k+2$ \\ \hline
 $2k+1$ & $k$ & $\cdots$ & $k$ & $k+1$ & $k+1$ & $\cdots$ & $\cdots$ & $k+1$ \\ \hline
  \end{tabular}
  \caption{Suffixes of $d_\ast=A_iB_j$}
  \label{dAB4k+1}
\end{table}

Assuming that $X''$ is a metric space, it is not isometric with $X$ since $X''$ has a cycles consisting of $2k$ edges with length $d_1$, whereas $X$ does not. 
To show that this is a \mutant of $X$, it remains to verify that the triangle inequality is satisfied by any triangle. 

First, Lemma \ref{lem_small_m-gon} implies that if all the vertices of a triangle belong to $\Delta_A$ (or $\Delta_B$) then the triangle inequality is satisfied. 

Suppose two vertices belong to $\Delta_A$ and the rest to $\Delta_B$, say $\triangle A_iA_jB_l$. 
Since $A_iA_j\le d_k\le A_iB_l, A_jB_l$, we have only to show $|A_iB_l-A_jB_l|\le A_iA_j$, which holds as $|A_iB_l-A_jB_l|\le d_1\le A_iA_j$. 
This is because the subscripts of $d_\ast=A_iB_l$ and $d_\ast=A_jB_l$ 
either coincide or differ by one. 

Suppose two vertices belong to $\Delta_B$ and the rest to $\Delta_A$, say $\triangle B_iB_jA_l$. 
We have only to show $|A_lB_i-A_lB_j|\le B_iB_j$. 
Suppose $A_lB_i=d_\lambda$ and $A_lB_j=d_\mu$. 
We show 
\begin{equation}\label{lambda_mu}
|\lambda-\mu|\le|j-i|_{2k+1}. 
\end{equation}

(i) If $1\le i,j\le k$, then for any $l$, $\lambda=k+i, \mu=k+j$. Since $|j-i|<k$ we have $|\lambda-\mu|=|j-i|=|j-i|_{2k+1}$. 

(ii) Suppose $1\le l\le k$ and $k+1\le i,j\le 2k+1$. Then $\lambda=3k+1-i$ and $\mu=3k+1-j$, which implies $|\lambda-\mu|=|j-i|=|j-i|_{2k+1}$ as $|j-i|\le k$. 

(iii) Suppose $1\le l\le k$ and $i\le k, \, k+1\le j\le 2k+1$. Then $\lambda=k+i$ and $\mu=3k+1-j$. 
We have 
\begin{equation}\label{la-mu}
\begin{array}{rll}
\mu-\lambda=2k+1-j-i&\le j-i-1 &\>\>\mbox{ since }\>\>k+1\le j,\\
\mu-\lambda=2k+1-j-i&\le 2k+1-(j-i)-1 &\>\>\mbox{ since }\>\>i\ge 1,\\
\lambda-\mu=j+i-2k-1&\le j-i-1 &\>\>\mbox{ since }\>\>i\le k,\\
\lambda-\mu=j+i-2k-1&\le 2k+1-(j-i) &\>\>\mbox{ since }\>\>j\le 2k+1,
\end{array}
\end{equation}
which implies 
\[
\mu-\lambda, \lambda-\mu\le\min\{j-i, 2k+1-(j-i)\}=|j-i|_{2k+1}.
\]

(iv) Suppose $k+1\le l\le 2k$ and $k+1\le i,j\le 2k+1$. Then $|\lambda-\mu|=|j-i|$ or $|j-i|-1$, which implies \eqref{lambda_mu}. 

(v) Suppose $k+1\le l\le 2k$ and $i\le k, \, k+1\le j\le 2k+1$. Then $\lambda=k+i$ and $\mu=3k+1-j$ or $3k+2-j$, and therefore $\left||\lambda-\mu|-|j+i-2k-1|\right|\le1$. 
\eqref{la-mu} shows that $|j+i-2k-1|\le|j-i|_{2k+1}-1$ holds unless $i\le k$ and $j=2k+1$, in which case $\lambda=k+i$ and $\mu=k+1$ and therefore $|\lambda-\mu|=i-1\le |i|_{2k+1}=|j-i|_{2k+1}$, whereas when $|j+i-2k-1|\le|j-i|_{2k+1}-1$, $\left||\lambda-\mu|-|j+i-2k-1|\right|\le1$ implies $|\lambda-\mu|\le|j-i|_{2k+1}$. 

Since $|\lambda-\mu|\le k$ as $k\le \lambda,\mu\le 2k$, \eqref{lambda_mu} implies
\[
|A_lB_i-A_lB_j|=|d_\lambda-d_\mu|\le d_{|\lambda-\mu|}\le d_{|j-i|_{2k+1}}=B_iB_j, 
\]
which completes the proof of the case when $n=4k+1$.

\medskip
(2) Assume $n=4k+3$ $(k\ge1)$. 
This case can be proved in the same way. 
Consider a $2k+1$-gon $\Delta_A=A_1\dots A_{2k+1}$ and a $2k+2$-gon $\Delta_B=B_1\dots B_{2k+2}$ with 
\[
A_iA_j=d_{|j-i|_{2k+1}}, \quad B_iB_j=d_{|j-i|_{2k+2}}.
\]
Put 
\[
A_iB_j=\left\{
\begin{array}{ll}
d_{k+j} &\>\>\mbox{ if }\>\> 1\le j\le k+1, \\[1mm]
d_{2k+1} &\>\>\mbox{ if }\>\> j=k+2 \>\>\mbox{ or }\>\>(i,j)=(2k+1,k+3), \\[1mm]
d_{2k+1-l} \>\> (1\le l\le k-1) &\>\>\mbox{ if }\>\>  j=k+l+2 \>\>\mbox{ and }\>\> i\le 2k+1-l, \>\>\mbox{ or } \\[1mm] 
 & \>\>\mbox{ \phantom{if} }\>\> j=k+l+3 \>\>\mbox{ and }\>\> i\ge 2k+1-l, \\[1mm]
d_{k+1}  &\>\>\mbox{ if }\>\> j=2k+2 \>\>\mbox{ and }\>\> i\le k+1.
\end{array}
\right.
\]
Then one can check that any triangle satisfies the triangle inequality in the same way as in the previous case. 
The proof is simpler since \eqref{la-mu} can be replaced by 
$|j+i-2k-3|\le|j-i|_{2k+2}-1$. 

\medskip
(3) Assume $n=4k$ $(k\ge2)$. 
Recall that a regular $4k$-gon has $4k$ edges of length $d_1, d_2, \dots, d_{2k-1}$ and $2k$ edges of length $d_{2k}$. 
Consider two circular spaces with $2k$ vertices $X_A'=\{A_0, A_1,\dots, A_{2k-1}\}$ and $X_B'=\{B_0, B_1, \dots, B_{2k-1}\}$ with 
\[
d(A_i,A_j)=d(B_i,B_j)=d_{|j-i|_{2k}}.
\]
\begin{figure}[htbp]
\begin{center}
\includegraphics[width=.46\linewidth]{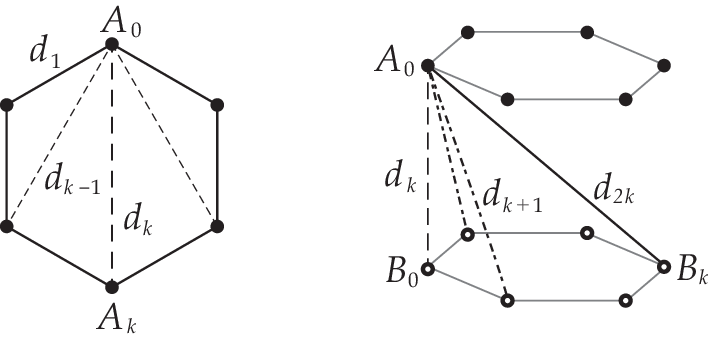}
\caption{$n/2$ is even}
\label{n=4k_bis}
\end{center}
\end{figure}
Then there are $4k$ edges of length $d_1, d_2, \dots, d_{k-1}$ and $2k$ edges of length $d_{k}$ so far. 
Assume 
\[
d(A_i,B_j)=d_{k+|j-i|_{2k}}
\]
(Figure \ref{n=4k_bis}). 
Then each vertex has two edges of length $d_1, \dots, d_{2k-1}$ and an edge of length $d_{2k}$. 
Put $X'=X_A'\cup X_B'$. 

Assuming that $X'$ is a metric space, it is not congruent to $X$ since $X'$ has two cycles consisting of $2k$ edges with length $d_1$, whereas $X$ does not. 
Now we have only to show the triangle inequalities. 

Lemma \ref{lem_small_m-gon} implies that if all the vertices of a triangle belong to $X_A'$ (or $X_B'$) then the triangle inequality is satisfied. 
Therefore we may assume, without loss of generality, that two vertices of the triangle belong to $X_A'$ and one to $X_B'$. Let the vertices be $A_0, A_j$ and $B_i$. We may assume by symmetry that $1\le j\le k$. 

(i) Suppose $0\le i\le j$. Then $d(A_0,B_i)=d_{k+i}, \, d(B_i,A_j)=d_{k+j-i}$, and $d(A_j,A_0)=d_j$. 
The shortest edge is $A_0A_j$. 
The triangle inequality is satisfied since 
\[
\begin{array}{l}
d_j+d_{k+i}>d_j+d_{k-i}\ge d_{k+j-i}, \\[1mm]
d_j+d_{k+j-i}\ge d_i+d_k\ge d_{k+i}.
\end{array}
\]

(ii) Suppose $j\le i\le j+k$. Then $d(A_0,A_j)=d_j$ and $d(A_j,B_i)=d_{k+i-j}$, and $d(B_i,A_0)=d_{k+|i|_{2k}}$. 
The shortest edge is $A_0A_j$. Remark that $k+|i|_{2k}=|k+i|_{4k}$. The triangle inequality is satisfied since 
\[
\begin{array}{l}
d_j+d_{k+i-j}\ge d_{\absn{k+i}}=d_{|k+i|_{4k}}, \\[1mm]
d_j+d_{|k+i|_{4k}}=d_{\absn{-j}}+d_{\absn{k+i}}\ge d_{\absn{k+i-j}}=d_{k+i-j}, 
\end{array}
\]
where the last inequality is the consequence of the circular triangle inequality \eqref{triangle_inequality_delta}. 

(iii) Suppose $j+k+1\le i\le 2k-1$. 
Then $d(A_0,A_j)=d_j, \, d(A_j,B_i)=d_{3k-i+j}$, and $d(B_i,A_0)=d_{3k-i}$, and the longest edge is $A_jB_i$. The triangle inequality is satisfied as 
$d_j+d_{3k-i}\ge d_{3k-i+j}$.

\medskip
(4) Assume $n=4k+2$ $(k\ge1)$. 
Recall that a circular space with $4k+2$ vertices has $4k+2$ edges of length $d_1, d_2, \dots, d_{2k}$ and $2k+1$ edges of length $d_{2k+1}$. 
Consider two circular spaces with $2k+1$ vertices $X_A'=\{A_0, A_1, \dots, A_{2k}\}$ and $X_B'=\{B_0, B_1, \dots, B_{2k}\}$ with 
\[
\begin{array}{c}
d(A_i,A_j)=d(B_i,B_j)=d_{|j-i|_{2k+1}}, \\[1mm]
d(A_i,B_j)=d_{2k+1-|j-i|_{2k+1}} 
\end{array}
\]
(Figure \ref{n=4k+2_bis}). 
\begin{figure}[htbp]
\begin{center}
\includegraphics[width=.43\linewidth]{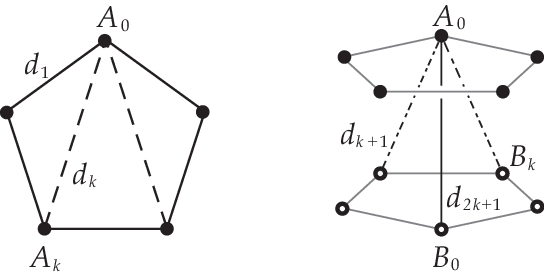}
\caption{$n/2$ is odd}
\label{n=4k+2_bis}
\end{center}
\end{figure}
Then each vertex has two edges of length $d_1, \dots, d_{2k}$ and an edge of length $d_{2k+1}$. 
The rest 
can be verified in the same way as in the previous case. 
\end{proof}

We note that if $n$ is even greater than or equal to $6$, the space $X'$ constructed above allows the isometric action of the dihedral group $D_{n}$ as well as $X$. 

There is alternative way for a regular $n$-gon $R_n$ when $n\ge9$. 
\begin{lemma}\label{lem_de_1_de_k-2}
Let $\delta_\bullet$ be given by \eqref{delta}. 
If $n=2k+1\ge 9$ and $1\le i\le k-2$ then $\delta_i+\delta_{k-1-i}\geq \delta_{k}$, 
and equality holds if and only if $n=9$ and $i=1$. 
\end{lemma}

\begin{proof}
First we show the inequality when $i=1$. 
If $n\geq 9$, i.e. $k\geq4$, we have 
\[
\frac13\le\frac{k-1}n=\frac{k-1}{2k+1}< \frac12, 
\]
which implies
\begin{equation*}
\begin{split}
\delta_{k}-\delta_{k-2}
&= \frac{1}{\sin(\pi/n)}\left(\sin\left(\frac{\pi k}n\right)- \sin\left(\frac{\pi(k-2)}n\right)\right)\\
&= \frac{2}{\sin(\pi/n)} \cos\left(\frac{\pi (k-1)}n\right) \sin\left(\frac\pi n\right)\\
&= 2 \cos\left(\frac{\pi (k-1)}n\right) \\
&\le 1 =  \delta_1,
\end{split}
\end{equation*}
with equality being satisfied if and only if $n=9$. 

If $i>1$ then the strong concavity of $\theta\mapsto \sin\theta$ on $[0,\pi]$ implies
\begin{equation*}
\begin{split}
\delta_i+\delta_{k-1-i}
&= \frac{1}{\sin(\pi/n)} \left(\sin\left(\frac{\pi i}n\right)+ \sin\left(\frac{\pi(k-1-i)}n\right)\right)\\
&> \frac{1}{\sin(\pi/n)} \left(\sin\left(\frac{\pi }n\right)+ \sin\left(\frac{\pi(k-2)}n\right)\right)\\
&= \delta_1+\delta_{k-2} \\
&\ge \de_k.
\end{split}
\end{equation*}
\end{proof}

\begin{proposition}\label{isomer_Rn_odd}
\begin{enumerate}
\item Suppose $n=2k+1$ $(k\ge 4)$. Let $R_n'=\{P_0',\dots,P_{n-1}'\}$ be obtained from a regular $n$-gon by exchanging the lengths of the longest and second longest diagonals; 
\begin{equation}
d(P_i',P_j')=\left\{\begin{array}{ll}
\de_{|i-j|_n} & \quad \mbox{if } \>\>\> |i-j|_n\le k-2, \\
\de_k & \quad \mbox{if } \>\>\> |i-j|_n=k-1, \\
\de_{k-1} & \quad \mbox{if } \>\>\> |i-j|_n=k. 
\end{array}\right. 
\nonumber
\end{equation}
Then $R_n'$ is a metric space homometric to, but not congruent to $R_n$. 
\item Suppose $n=9$. Put $R_9''=\{A_0,A_1,A_2,B_0,B_1,B_2,C_0,C_1,C_2\}$ with \[
\begin{array}{ll}
d(A_i,A_j)=d(B_i,B_j)=d(C_i,C_j)=1 &\quad (i\ne j),\\[1mm]
d(A_i,B_i)=d(B_i,C_i)=d(C_i,A_i)=\de_2 &\quad (\forall i),\\[1mm]
d(A_i,B_{i+1})=d(B_i,C_{i+1})=d(C_i,A_{i+1})=\de_3 &\quad (\forall i),\\[1mm]
d(A_i,B_{i+2})=d(B_i,C_{i+2})=d(C_i,A_{i+2})=\de_4 &\quad (\forall i),
\end{array}\]
where subscripts are considered modulo $3$. 
Then $R_9''$ is a metric space homometric to, but not congruent to $R_9$. 
\end{enumerate}
\end{proposition}

\begin{proof}

(1) $R_n'$ is not congruent to $R_n$ since $R_n'$ has a triple of points with edge lengths $\de_1,\delta_{k-2},\delta_{k}$, whereas $R_n$ does not. 
It remains to verify that the triangle inequality is satisfied by any triangle, namely, we have to show $a'+b'\ge c'$ for any triangle $\triangle P_i'P_j'P_m'$ in $R_n'$ with edge lengths $a',b'$ and $c'$. 
Since the triangle inequality is satisfied for any triangle $\triangle P_iP_jP_m$ in $R_n$ with edge lengths $a,b$ and $c$, we have only to show $a'+b'\ge c'$ when $a'<a$ or $c'>c$. 

(i) $a'<a$ occurs only if $a'=\de_{k-1}$. In this case 
$a'+b'\ge\de_{k-1}+\de_1>\de_k\ge c'$. 

(ii) $c'>c$ occurs only if $c'=\de_k$. 
We may assume without loss of generality that $(P_i', P_j', P_m')=(P_i', P_0', P_{k-1}')$ 
$(1\le i\le n-1, \,i\ne k-1)$. 

(ii-1) When $1\le i\le k-2$, 
Lemma \ref{lem_de_1_de_k-2} implies $d(P_0',P_i')+d(P_i',P_{k-1}')=\delta_i+\delta_{k-1-i}\ge \de_k$.

(ii-2) When $i=k,k+1,k+2, 2k-2,2k-1,2k$, either $d(P_0',P_i')$ or $d(P_i',P_{k-1}')$ is equal to $\de_{k-1}$ or $\de_k$, which implies 
$d(P_0',P_i')+d(P_i',P_{k-1}') \ge \de_1+\de_{k-1}>\de_k.$

(ii-3) When  $i=k+3,\dots,2k-3$ ($k\geq6$), using the symmetry $P_m'\leftrightarrow P_{n-m}'$, if we put $j=n-i$ $(4\le j\le k-2)$ we have 
\[
d(P_0',P_i')+d(P_i',P_{k-1}')=d(P_0',P_j')+d(P_j',P_{k+2}'). 
\]
Since $4\le k+2-j\le k-2$ and $\de_m$ is increasing with respect to $m$, 
\[
{\rm RHS}>d(P_0',P_j')+d(P_j',P_{k-1}')\ge\de_k, 
\]
where the last inequality is given in the case (ii-1). 

\smallskip
(2) Note that $\de_1=1, \de_2\approx1.88, \de_3\approx2.53, \de_4=1+\de_2\approx 2.88$. Since $R_9''$ contains no triangles with side lengths $1,1,\de_3$ and $1,1,\de_4$, $R_9''$ is a metric space. 
Since from each vertex of $R_9''$ there emanate nine edges of lengths $\de_1, \de_2, \de_3$, and $\de_4$, respectively, it follows that $R_9''$ and the regular nonagon $R_9$ are homometric. 
\end{proof}

\section{Identification of regular polygons by distance multisets}

Theorem \ref{thm_homometric} implies 

\begin{corollary}\label{thm_Riesz}
Neither an $n$-cycle graph nor the regular $n$-gon $(n\ge4)$ can be identified by the distance multiset. 
\end{corollary}

In what follows we only study the regular $n$-gons. 
The above corollary implies that in order to identify a regular polygon by its distance multiset, it is necessary to restrict the range of the metric spaces to be handled. 
One way is to restrict it to subspaces of Euclidean spaces, however this is not sufficient 
as non-congruent homometric spaces constructed in Theorem \ref{thm_homometric} can be realized in the Euclidean spaces as follows. 

When $n=4$ (cf Figure \ref{four_pts} right) by 
\[(0,0,0), \>
(1,0,0), \>
(0,0,1), \>
\left(\frac12,\frac{\sqrt3}2,0\right), 
\]
when $n=5$ (cf. Figure \ref{five_pts_bis} right) by 
\[\begin{array}{l}
\displaystyle 
\left(\frac{\sqrt5+3}4,0,\frac{\sqrt5+1}4,0\right), 
\left(0,0,\frac{\sqrt5-1}4,\frac{\sqrt5+1}4\right), 
\left(0,0,\frac{\sqrt5-1}4,-\frac{\sqrt5+1}4\right), 
\\[4mm]  \displaystyle 
\left(0,\frac12,0,0\right), 
\left(0,-\frac12,0,0\right),
\end{array}
\]
when $n=6$ (cf. Figure \ref{n=4k_bis}) by 
\begin{equation}\label{isomer_hexagon}
\begin{array}{l}
\displaystyle 
\left(\frac1{\sqrt3},0,0\right), 
\left(-\frac1{2\sqrt3},\frac12,0\right), 
\left(-\frac1{2\sqrt3},-\frac12,0\right), \\[4mm]
\displaystyle 
\left(-\frac1{\sqrt3},0,\frac{2\sqrt2}{\sqrt3}\right), 
\left(\frac1{2\sqrt3},\frac12,\frac{2\sqrt2}{\sqrt3}\right), 
\left(\frac1{2\sqrt3},-\frac12,\frac{2\sqrt2}{\sqrt3}\right).
\end{array}
\end{equation}
These examples show that in order to identify the regular polygons by the distance multiset among subsets of Euclidean spaces, the dimension of the Euclildean space must be $2$. 

\begin{remark}\label{rm_6-isomer_7_embed}\rm \begin{enumerate}
\item When $n=6$ there is another way to construct a non-congruent homometric space of the regular hexagon. Set $d(P_i,P_{i+1})=1$, where subscripts are considered modulo $6$, $d(P_0,P_3)=d(P_1,P_5)=d(P_2,P_4)=2$, and other edges having length $\sqrt3$. This space cannot be embedded isometrically in the Euclidean space. 
\item When $n=7$, Schoenberg's Theorem implies that the space constructed in Theorem \ref{thm_homometric} can be embedded isometrically in $\RR^6$. 
In fact, putting $H=(\de_{ij}-1/7)_{i,j}$, where $\de_{ij}$ is the Kronecker delta, and 
\setlength\arraycolsep{2.0pt}
\[
D^{(2)}=\left(\begin{array}{ccccccc}
0 & 1 & 1 & 1 & 1 & \de_2^{\,2} & \de_2^{\,2} \\
1 & 0 & 1 & 1 & \de_2^{\,2} & \de_2^{\,2} & \de_2^{\,2} \\
1 & 1 & 0 & 1 & \de_2^{\,2} & \de_3^{\,2} & \de_3^{\,2} \\
1 & 1 & 1 & 0 & \de_2^{\,2} & \de_3^{\,2} & \de_3^{\,2} \\
1 & \de_2^{\,2} & \de_2^{\,2} & \de_2^{\,2} & 0 & \de_3^{\,2} & \de_3^{\,2} \\
\de_2^{\,2} & \de_2^{\,2} &  \de_3^{\,2} & \de_3^{\,2} & \de_3^{\,2} & 0 & \de_3^{\,2} \\
\de_2^{\,2} & \de_2^{\,2} &  \de_3^{\,2} & \de_3^{\,2} & \de_3^{\,2} & \de_3^{\,2} & 0
\end{array}
\right), \quad
G=-\frac12 HD^{(2)}H, 
\]
where $\de_i=\sin(i \pi/7)/\sin(\pi/7)$, $G$ does not have negative eigenvalues and has rank $6$. 
\end{enumerate}
\end{remark}

Hereafter we assume that all the points are in the same plane $\RR^2$. 
We write $AB=d(A,B)$ for $A,B\in\RR^2$ when there is no fear of confusion.

\begin{lemma}\label{lemma_Fijk}
Suppose $n$ is a multiple of $3$ that is greater than or equal to $6$. 
Put $\theta=\pi/n$ and define $F_n(i,j,k)$ for $1\le i,j,k \le \lfloor n/2\rfloor$ by 
\begin{equation}\label{Fnijk}
\begin{array}{rcl}
F_n(i,j,k)&=& \sin^4\theta+
\sin^4i\theta+\sin^4j\theta+\sin^4k\theta \\[1mm]
&&-\sin^2i\theta\sin^2j\theta-\sin^2i\theta\sin^2k\theta-\sin^2j\theta\sin^2k\theta  \\[1mm]
&&-\sin^2\theta\left(\sin^2i\theta+\sin^2j\theta+\sin^2k\theta\right).
\end{array} 
\end{equation}
\begin{enumerate}
\item If there exists a point whose distances from the vertices of a unit equilateral triangle $\triangle$  
are $\de_i,\de_j$, and $\de_k$ $(1\le i,j,k\le\lfloor n/2\rfloor)$ then $i,j$ and $k$ satisfy the equation $F_n(i,j,k)=0$. 
\item The equation 
\begin{equation}\label{eq_Fnijk}
F_n(i,j,k)=0\qquad(1\le i\le j\le k\le \lfloor n/2\rfloor)
\end{equation}
has a solution $(n/3-1, n/3, n/3+1)$, and 
if $n=6$ it has an additional solution $(1,1,2)$, and if $n$ is even and $n\ge12$ then it has additional solutions $(n/6-1, n/6, n/6)$ and $(n/6, n/6, n/6+1)$. 
\item If $6\le n\le 30$ and $n\ne24$ then the equation \eqref{eq_Fnijk} does not have any more solutions, and if $n=24$ then it has only one additional solution $(8,10,11)$. 
\item The six points whose distances from the vertices of a unit equilateral triangle 
are $\de_{n/3-1}, \de_{n/3}$, and $\de_{n/3+1}$, which correspond to the solution $(n/3-1, n/3, n/3+1)$ to the equation \eqref{eq_Fnijk}, are located on the lines obtained by extending the edges of the triangle so that the distance from the closest vertex is equal to $\de_{n/3-1}$ as illustrated in {\rm Figure \ref{ijk_position}}. 
\begin{figure}[htbp]
\begin{center}
\includegraphics[width=7cm]{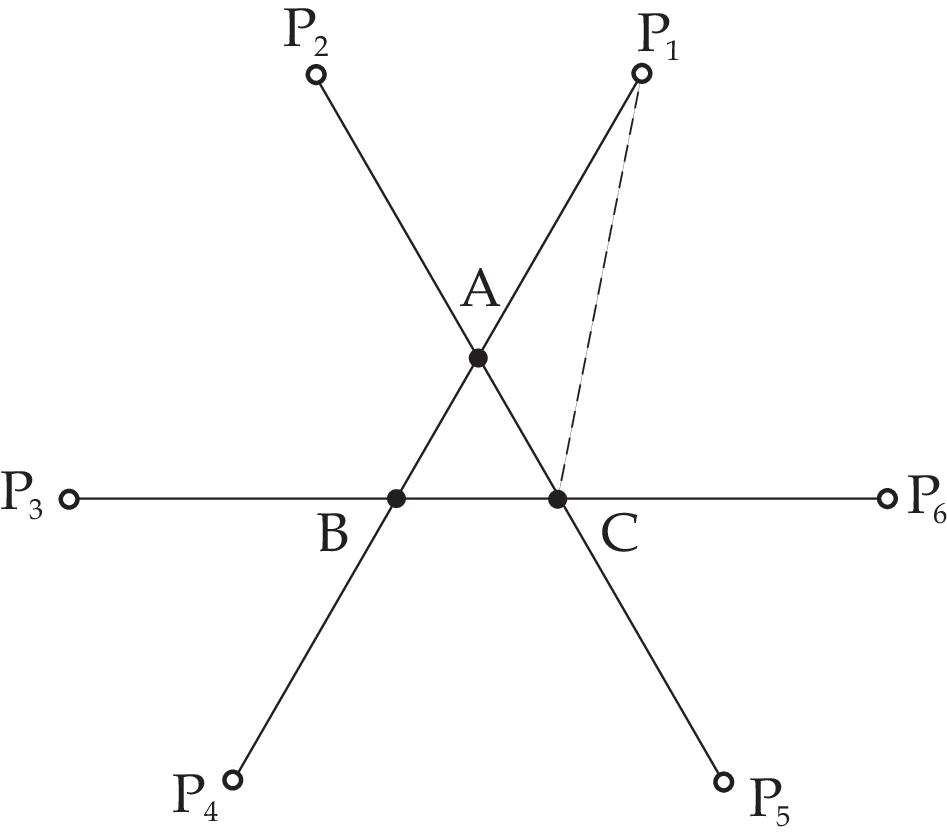}
\caption{$P_1A=\de_{n/3-1}, P_1B=\de_{n/3+1}$ and $P_1C=\de_{n/3}$. 
$P_2A=P_3B=P_4B=P_5C=P_6C=\de_{n/3-1}$.}
\label{ijk_position}
\end{center}
\end{figure}
\end{enumerate}
\end{lemma}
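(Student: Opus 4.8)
Place the unit equilateral triangle $\triangle$ with vertices $V_1, V_2, V_3$, and let $P$ be a point in $\RR^2$ with $PV_1 = a$, $PV_2 = b$, $PV_3 = c$. The plan is to derive an algebraic relation among $a^2, b^2, c^2$ that a point in the plane must satisfy, and then substitute $a = \de_i, b = \de_j, c = \de_k$ and use \eqref{delta} to convert the relation into a statement about $\sin^2 i\theta$, etc. For part (1) I would use the Cayley--Menger determinant: four points in $\RR^2$ are affinely dependent, so the $4\times4$ (bordered) Cayley--Menger determinant of $P, V_1, V_2, V_3$ vanishes. Expanding this determinant with the side lengths of $\triangle$ all equal to $1$ yields (up to a nonzero constant) a symmetric quadratic expression in $a^2, b^2, c^2$: explicitly, something of the form $a^4 + b^4 + c^4 - a^2b^2 - a^2c^2 - b^2c^2 - (a^2 + b^2 + c^2) + 1 = 0$. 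Now substitute $a^2 = \de_i^2 = \sin^2 i\theta / \sin^2\theta$ (and likewise for $b, c$), noting $1 = \de_1^2/\de_1^2 = \sin^2\theta/\sin^2\theta$; multiplying through by $\sin^4\theta$ turns the left-hand side into exactly $F_n(i,j,k)$ as written in \eqref{Fnijk}. So part (1) reduces to verifying the Cayley--Menger expansion and matching constants.

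**Parts (2) and (4): exhibiting and locating the solution $(n/3-1, n/3, n/3+1)$.** For part (4) I would argue geometrically, which then makes part (2)'s first assertion a corollary. Take a vertex $A$ of $\triangle$ and extend one edge beyond $A$ to a point $P$ with $PA = \de_{n/3-1}$; I claim the other two distances from $P$ are $\de_{n/3}$ and $\de_{n/3+1}$. This is a direct computation: with $A$ at the origin, the far vertices $B, C$ at distance $1$ making angles $0$ and $\pm\pi/3$ with the extended edge, and $P$ on the extension, the law of cosines gives $PB^2 = 1 + \de_{n/3-1}^2 + 2\de_{n/3-1}$ and $PC^2 = 1 + \de_{n/3-1}^2 + 2\de_{n/3-1}\cos(\pi/3) = 1 + \de_{n/3-1}^2 + \de_{n/3-1}$. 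One then checks, using the product-to-sum identity $\sin\frac{(k-j)\pi}{3k} + \sin\frac{(k+j)\pi}{3k} = \sin\frac{j\pi}{3k}$ already exploited in the remark after Theorem \ref{thm_chara_magnitude} (here with the relevant angle at step $1/n$), that these equal $\de_{n/3}^2$ and $\de_{n/3+1}^2$ respectively. There are six such points (two per edge, one on each extension), all at distance $\de_{n/3-1}$ from their nearest vertex, which is the content of Figure \ref{ijk_position}. The extra solutions $(1,1,2)$ for $n=6$, and $(n/6-1,n/6,n/6)$, $(n/6,n/6,n/6+1)$ for even $n\ge12$, I would verify by direct substitution into \eqref{Fnijk}, again reducing each to a product-to-sum trig identity; geometrically the latter two come from the point at distance $\de_{n/6}$ along an edge extension when $n/6$ is an admissible index.

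**Part (3): no further solutions for small $n$.** This is the computational heart and the main obstacle. For each $n \in \{6,9,12,15,18,21,24,27,30\}$ the number of triples $(i,j,k)$ with $1\le i\le j\le k\le \lfloor n/2\rfloor$ is finite, and $F_n(i,j,k)$ is an explicit algebraic number (a polynomial with rational coefficients in $\cos\theta$, $\theta = \pi/n$). In principle one evaluates $F_n(i,j,k)$ on all these triples and checks which vanish; the difficulty is that this is a genuine finite-but-large verification, and one must be careful that near-zero values are actually nonzero. The cleanest rigorous route is to work in the cyclotomic field $\QQ(\zeta_{2n})$: write each $\sin^2 m\theta$ as $\frac{1}{4}(2 - \zeta_{2n}^m - \zeta_{2n}^{-m})$, so $4^2 F_n(i,j,k)$ becomes an explicit element of $\ZZ[\zeta_{2n}]$, and one checks exactly whether it is zero using the minimal polynomial of $\zeta_{2n}$. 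I would present part (3) as the outcome of this finite check (a table or case analysis), highlighting only the genuinely nonobvious coincidence $(8,10,11)$ at $n=24$, which one confirms satisfies $F_{24} = 0$ by the same cyclotomic computation. The expected obstacle is purely the bookkeeping volume of part (3); parts (1), (2), and (4) are short once the Cayley--Menger identity and the product-to-sum identity are in hand.
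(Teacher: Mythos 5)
Your proposal is correct and follows essentially the same route as the paper: part (1) reduces to the same algebraic relation among the squared distances (the paper obtains it from explicit coordinates of the intersection of two circles rather than the Cayley--Menger determinant, but the resulting identity is identical), parts (2) and (4) rest on the same trigonometric facts (in particular $\de_1+\de_{n/3-1}=\de_{n/3+1}$), and part (3) is in both cases a finite exact computation (the paper cites Maple). The only slip is your parenthetical geometric gloss on the solutions $(n/6-1,n/6,n/6)$ and $(n/6,n/6,n/6+1)$ --- those points are equidistant from two vertices of the triangle and hence lie on a perpendicular bisector, not on an edge extension --- but since you verify these solutions by direct substitution into \eqref{Fnijk}, this does not affect the argument.
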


\begin{proof}
(1) Let $B=(0,0), C=(1,0)$ and $A=(1/2, \sqrt3/2)$. The point $P$ with $PB=\de_j$ and $PC=\de_k$ is given by 
\[
P=\left(
\frac{\de_j^2-\de_k^2+1}2, \pm\frac{\sqrt{-\de_j^4-\de_k^4+2\de_j^2\de_k^2+2\de_j^2+2\de_k^2-1}}2
\right). 
\]
Then $PA^2-\de_i^2=0$ gives $F_n(i,j,k)=0$. 

(2) 
Noting that $(n/3)\theta=\pi/3$, we obtain
\[
F_n\left(\frac{n}3-1,\frac{n}3,\frac{n}3+1\right)=
\frac9{16}\left(\sin^4\theta+\cos^4\theta+2\sin^2\theta\cos^2\theta-2\sin^2\theta-2\cos^2\theta+1\right)=0.
\]
Similarly, as $(n/6)\theta=\pi/6$, we obtain
\[
F_n\left(\frac{n}6,\frac{n}6,\frac{n}6\pm1\right)=\frac1{16}\left(
1+13\sin^4\theta+\cos^4\theta+14\sin^2\theta\cos^2\theta-14\sin^2\theta-2\cos^2\theta
\right)=0.
\]
When $n=24$, $F_{24}(8,10,11)=0$ follows from 
\[
\begin{array}{rlrl}
\sin\theta&=\displaystyle \sin\frac\pi{24}=\sqrt{\frac{4-\sqrt2-\sqrt6}8}, \>&\> 
\sin(8\theta)&=\displaystyle \sin\frac\pi3=\frac{\sqrt3}2, \\
\sin(10\,\theta)&=\displaystyle \sin\left(\frac\pi3+\frac\pi{12}\right)=\frac{\sqrt2+\sqrt6}{4},\>&\>
\sin(11\,\theta)&=\displaystyle \sin\left(\frac\pi3+\frac\pi{8}\right)=\sqrt{\frac{4+\sqrt2+\sqrt6}8}.
\end{array}
\]

(3) can be shown by computing all the values of $F_n(i,j,k)$ ($6\le n\le30, 1\le i\le j\le k\le \lfloor n/2\rfloor$) with the help of a computer.\footnote{The second author used Maple for this computation.} 

(4) follows from the fact that $1+\de_{n/3-1}=\de_{n/3+1}$. 
\end{proof}

\begin{theorem}\label{thm_beta}
The regular $n$-gon can be identified by the distance multiset in the set of tuples of $n$ points in the plane. 
\end{theorem}

\begin{proof}
Part I. Suppose $n\le6$. 

(1) Case $n=4$. If there is a vertex $P$ with two sides of length $\sqrt2$, the other three points form an equilateral triangle so that the distances from the point $P$ to the vertices are $\sqrt2, \sqrt2$ and $1$, which cannot be realized in a plane. If two sides of length $\sqrt2$ do not have a common vertex then the configuration is isometric to the square.  

\smallskip
(2) Case $n=5$. 
Assume there is a vertex that has three edges of length $1$. 
The multiset formed by the lengths of the edges connecting the endpoints of these three edges is either $1,1,\de_2$ or $1,\de_2,\de_2$ or $\de_2,\de_2,\de_2$, which cannot happen in a plane. 
It follows that each vertex has exactly two edges of length $1$. 
Since there is no multi-edge, length $1$ edges must form a pentagon. 
Connecting the remaining parts with edges of length $\de_2$, we get a regular pentagon. 

\smallskip
(3) Case $n=6$. Note that $\de_2=\sqrt3$ and $\de_3=2$. 
Assume there is a vertex that has three edges of length $1$. 
The multiset formed by the lengths of the edges connecting the endpoints of these three edges is either $\sqrt3, \sqrt3, \sqrt3$ or $1,1,\sqrt3$ or $1,\sqrt3,2$. Let these three cases be (i), (ii) and (iii) respectively 
(Figure \ref{n=6_123}). 
\begin{figure}[htbp]
\begin{center}
\includegraphics[width=9.6cm]{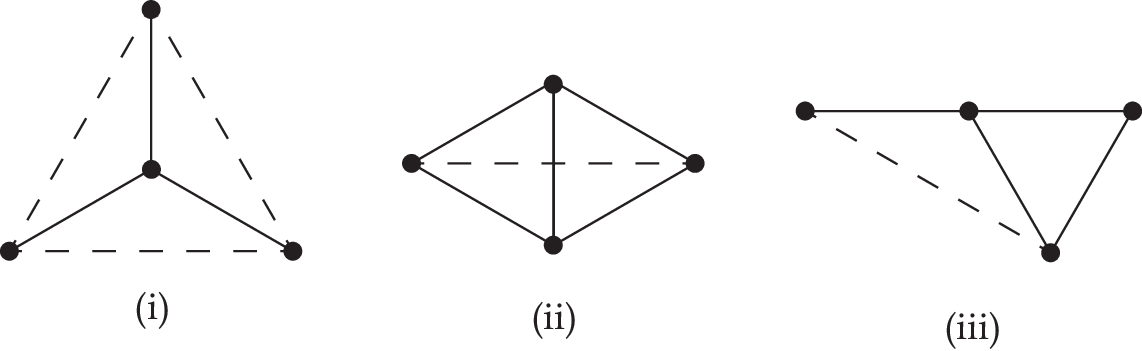}
\caption{The solid lines represent edges of length $1$ and the dashed $\sqrt3$.}
\label{n=6_123}
\end{center}
\end{figure}

(3-i) Let the four vertices be $A,B,C$ and $D$ with $D$ being the center. 
The remaining two vertices must satisfy the condition that their distance from points $A,B,C$ or $D$ is either $1,\sqrt3$, or $2$. 
But $\{X\,|\,1\le XA,XB,XC\le2\}$ consists of three points, $P,Q$ and $R$ as illustrated in Figure \ref{n=6_1}, 
\begin{figure}[htbp]
\begin{center}
\includegraphics[width=5cm]{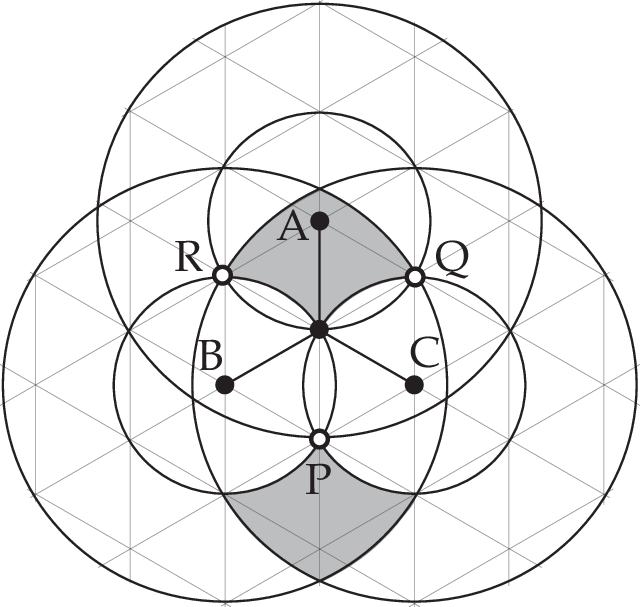}
\caption{Circles with centers $A, B, C$ and radii $1$ and $2$. The gray part is $\{X\,|\,1\le XB,XC\le2\}$}
\label{n=6_1}
\end{center}
\end{figure}
and any two points from $P,Q$ and $R$ would create nine sides of length $1$ in all. 
Therefore the case (i) cannot happen.

(3-ii) Let the four vertices be $A,B,C$ and $D$ as illustrated in Figure \ref{n=6_2bis}. 
Lemma \ref{lemma_Fijk} shows that the set $\{X\,|\,XA,XB,XC\in\{1,\sqrt3,2\}\}$ is given as in Figure \ref{equi_triangle_9pts}, which implies that $\{X\,|\,XA,XB,XC,XD\in\{1,\sqrt3,2\}\}$ consists of six points, $P, \dots, R'$ as illustrated in Figure \ref{n=6_2bis}. 
\begin{figure}[htbp]
\begin{center}
\begin{minipage}{.45\linewidth}
\begin{center}
\includegraphics[width=3.0cm]{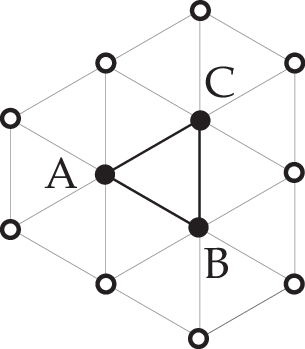}
\caption{$\{X\,|\,XA,XB,XC\in\{1,\sqrt3,2\}\}$}
\label{equi_triangle_9pts}
\end{center}
\end{minipage}
\hskip 0.1cm
\begin{minipage}{.5\linewidth}
\begin{center}
\includegraphics[width=2.6cm]{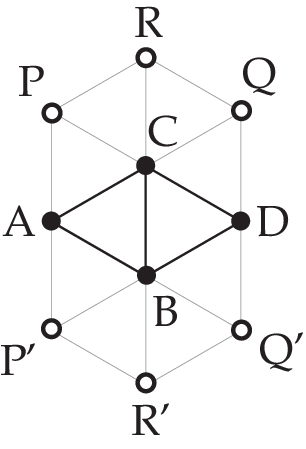}
\caption{$\{X\,|\,XA,XB,XC,XD\in\{1,\sqrt3,2\}\}$}
\label{n=6_2bis}
\end{center}
\end{minipage}
\end{center}
\end{figure}

Since any two points from these six points 
would create more than six sides of length $1$ in all, the case (ii) cannot happen.

(3-iii) Let the four vertices be $A,B,C$ and $D$ be as in Figure \ref{n=6_3}. 
Then Lemma \ref{lemma_Fijk} implies that $\{X\,|\, XA,XB,XC,XD\in\{1,\sqrt3,2\}\}=\{P,Q,R,S\}$, where $P,Q,R$ and $S$ are given as in Figure \ref{n=6_3}. 
\begin{figure}[htbp]
\begin{center}
\includegraphics[width=2.4cm]{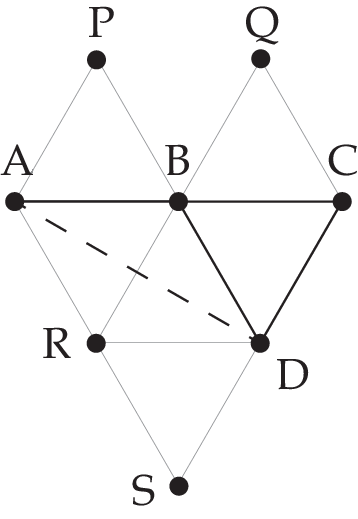}
\caption{}
\label{n=6_3}
\end{center}
\end{figure}

But any two points from $P,Q,R$ and $S$ would create more than six edges of length $1$ in all. Therefore the case (iii) cannot happen. 

It completes the proof that there is no vertex that has three edges of length $1$. Therefore, the configuration of edges of length $1$ is either a (topological) hexagon or a disjoint union of two triangles. 
But since Figure \ref{equi_triangle_9pts} shows that it cannot be a disjoint union of two triangles, it should be a (topological) hexagon. 
Let it be $P_0\dots P_5$. 
Assume that $P_{i-1}P_{i+1}=2$ for some $i$. 
Then, 
since $\{X\,|\,XP_{i-1}, XP_{i}, XP_{i+1}\in\{1,\sqrt3,2\}\}$ consists of six points as illustrated in Figure \ref{n=6_three_pts}, the only possible configuration is given by Figure \ref{n=6_big_triangle}, 
\begin{figure}[htbp]
\begin{center}
\begin{minipage}{.6\linewidth}
\begin{center}
\includegraphics[width=5.4cm]{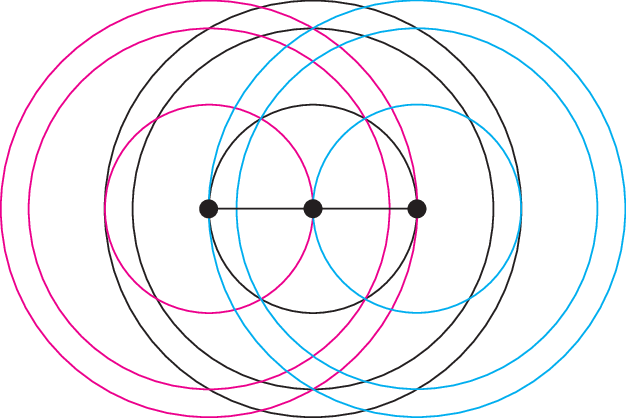}
\caption{Circles with radii $1,\sqrt3,2$ with centers $P_{i-1},P_i,P_{i+1}$}
\label{n=6_three_pts}
\end{center}
\end{minipage}
\hskip 0.4cm
\begin{minipage}{.29\linewidth}
\begin{center}
\includegraphics[width=1.8cm]{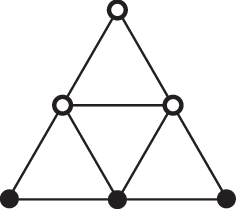}
\caption{}
\label{n=6_big_triangle}
\end{center}
\end{minipage}
\end{center}
\end{figure}
where there are nine edges of length $1$, which is not allowed. 
Therefore, each $P_{i-1}P_{i+1}$ is equal to $\sqrt3$. 
To close, all edges must turn in the same direction, resulting in a regular hexagon, which completes the proof of the case $n=6$. 

\medskip
Part II. Suppose $n\ge7$. . 

First remark that $\de_3=\sin(3\pi/n)/\sin(\pi/n)>2$ when $n>6$. 
Consider the shortest edge graph $G$, i.e. the graph with the edges with length $1$. 
Assume $G$ has a trivalent vertex. 
The multiset formed by the distances between the endpoints of three edges emanating from the trivalent vertex is either $[1,1,\de_2]$ or $[\de_2, \de_2, \de_2]$ or $[1,\de_2,\de_2]$. 
First two cases are impossible since $n\ne6$ and the last case can happen only if $n=12$. 
Remark that in the latter case there appears an equilateral triangle with side length $1$. 
\begin{figure}[htbp]
\begin{center}
\includegraphics[width=2.0cm]{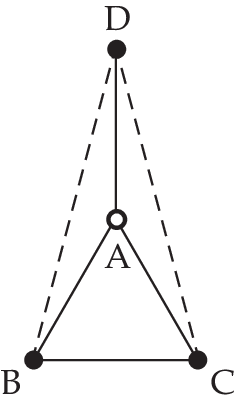}
\caption{$n=12$, $DB=DC=\de_2$}
\label{n=12_trivalent}
\end{center}
\end{figure}

If there is no vertex that has three edges of length $1$, in particular when $n\ne12$, 
the shortest edge graph $G$ is a union of cycles. 
Since the distance between the endpoints of adjacent edges is either $1$ or $\de_2$, the cycle is either a equilateral triangle or a regular $n$-gon. 

\smallskip
(4) Case when 
$n$ is not a multiple of $3$. 
Since $n\ne12$, the shortest edge graph $G$ 
is a union of cycles. Since it cannot be a union of triangles, it should be a regular $n$-gon. 

\smallskip
(5) Case when $n$ is a multiple of $3$. 
We have only to show that the shortest edge graph $G$ does not contain an equilateral triangle. 
Lemma \ref{lemma_Fijk} implies that if there is an equilateral triangle, 
all the other vertices correspond to the solutions of \eqref{eq_Fnijk}. 
The solutions $(n/6-1, n/6, n/6)$ and $(n/6, n/6, n/6+1)$ each produce three points, and the solutions $(n/3-1, n/3, n/3+1)$ and $(8,10,11)$ each produce six points. 

\smallskip
(5-i) Case $n=9$. 
The equation \eqref{eq_Fnijk} has a unique solution $(2,3,4)$ which corresponds to six points as illustrated in Figure \ref{ijk_position}. 
Since $P_1P_2=\de_2>1$ and $P_2P_3=1+\de_2>2$, there are no more edges with length $1$ except for the three edges of $\triangle ABC$. 

\smallskip
(5-ii) Case $n=12$. 
All the points whose distances from the vertices of an equilateral triangle $\triangle ABC$ belong to $\{\de_1,\dots,\de_6\}$ are illustrated in Figure \ref{n=12_last}. 
\begin{figure}[htbp]
\begin{center}
\includegraphics[width=9cm]{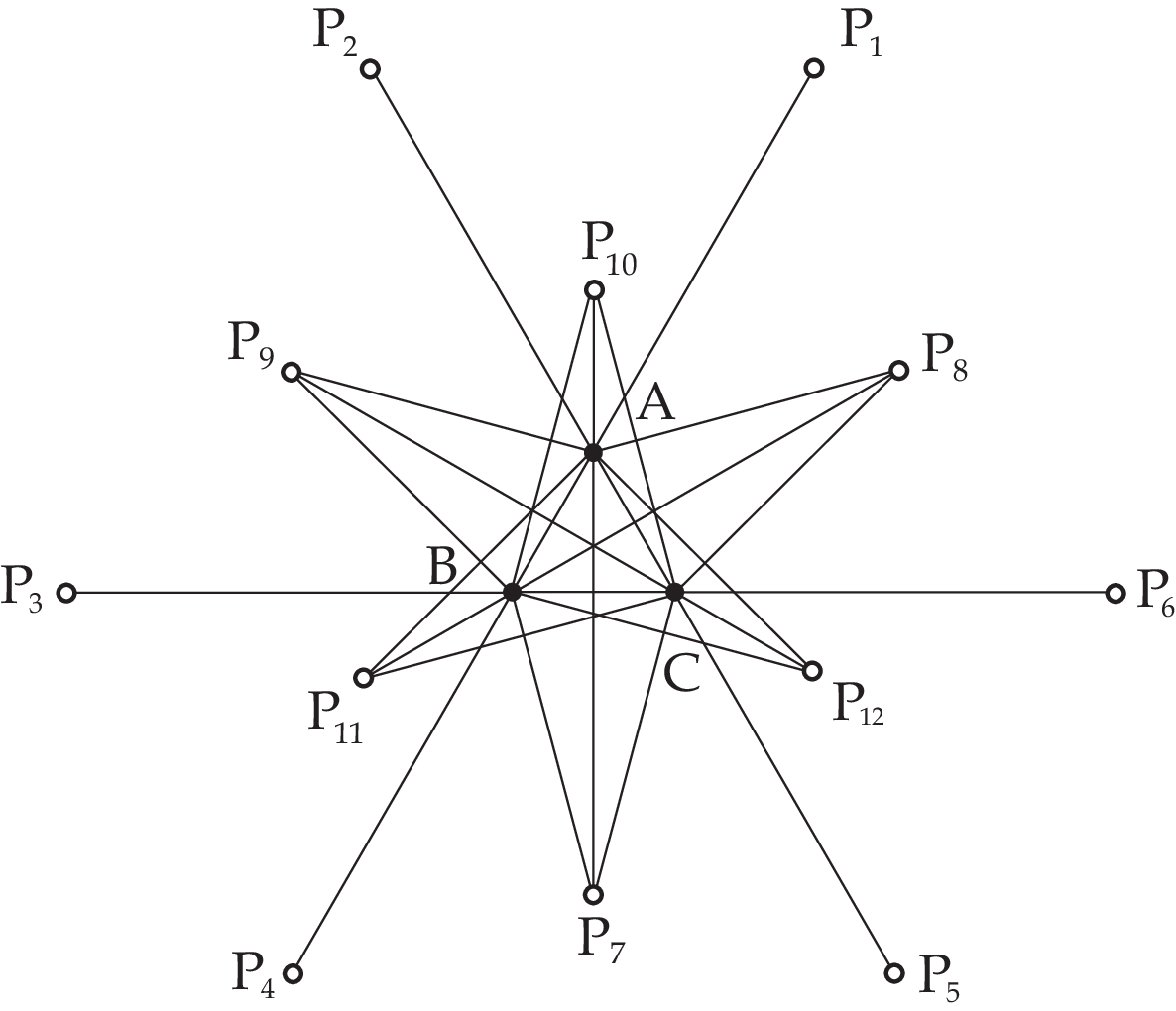}
\caption{}
\label{n=12_last}
\end{center}
\end{figure}
$P_1,\dots,P_6$ correspond to the solution $(3,4,5)$ to the equation \eqref{eq_Fnijk}, i.e., $P_1A=\de_3, P_1C=\de_4$ and $P_1B=\de_5$ {\it etc.}, $P_7,P_8$ and $P_9$ to $(2,2,3)$, i.e., $P_7B=P_7C=\de_2$ and $P_7A=\de_3$ {\it etc.}, and $P_{10}, P_{11}$ and $P_{12}$ to $(1,2,2)$, i.e., $P_{10}B=P_{10}C=\de_2$ and $P_{10}A=\de_1$ {\it etc.}
Among these fifteen points, $A,B,C,P_1,\dots,P_{12}$, there are only six pairs of points with distance $1$, which is not allowed. 

\smallskip
(5-iii) Case $n\ge15$. 
Assume that the shortest edge graph $G$ consists of $n/3$ triangles. 
Since all edges of length $1$ are used up, the distance between vertices in different triangles is bounded below by $\de_2$. Consequently, there must exist two vertices from distinct triangles at a distance $\de_2$. Let $A$ be such a vertex belonging to a triangle $ABC$, and let $D$ be the corresponding vertex of another triangle. the triplet of distances from $D$ to $A, B$, and $C$ must be either $\delta_2,\delta_2,\delta_3$ (Figure \ref{---triangle}) or $\delta_2,\delta_3,\delta_3$ (Figure \ref{triangle--}). However, the former is possible only when $n=12$, which contradicts our assuption $n\ge15$, and the latter occurs only when $n=18$. 
The solutions to \eqref{eq_Fnijk} correspond to twelve points when $n=18$. 
Adding these twelve points to the three vertices of the triangle does not give a total of $18$ points, which yields a contradiction.

\begin{figure}[htbp]
  \begin{minipage}[b]{0.45\textwidth}
    \centering
    \begin{tikzpicture}[scale=0.84]
    \def\L{2} 
    \coordinate (Top)   at (0, {\L/2});               
    \coordinate (Bot)   at (0, {-\L/2});              
    \coordinate (Right) at ({\L*sqrt(3)/2}, 0);       
    
    \def\deltaThree{3.5} 
    \coordinate (Left)  at (-\deltaThree, 0);         

    \draw (Top) -- (Bot) -- (Right) -- cycle; 
    \draw (Left) -- (Right);                 
    \draw (Left) -- (Top);                 
    \draw (Left) -- (Bot);                 
    
    \fill (Top)   circle (2.5pt);
    \fill (Bot)   circle (2.5pt);
    \fill (Right) circle (2.5pt);
    \fill (Left)  circle (2.5pt);

    \path (Left) -- (Top) node[midway, above] {$\delta_2$};
    \path (Left) -- (0,0) node[midway, below] {$\qquad\qquad\delta_3$};
    \path (Bot) -- (Right) node[midway, below right] {$1$};

  \end{tikzpicture}
  \caption{$\delta_2,\delta_2,\delta_3$}
\label{---triangle}
  \end{minipage}
\hspace{0.1cm}
  \hfill
  \begin{minipage}[b]{0.45\textwidth}
    \centering
  \begin{tikzpicture}[scale=0.84]
    
    \def\L{2} 
    
    \coordinate (B) at (0, 0);                 
    \coordinate (A) at (0, \L);                
    \coordinate (C) at ({\L*sqrt(3)/2}, {\L/2}); 
    
    \def\deltaTwo{3.5} 
    \coordinate (D) at ({\L*sqrt(3)/2 + \deltaTwo}, {\L/2});

    \draw (A) -- (B) -- (C) -- cycle; 
    \draw (C) -- (D);                 
    \draw (A) -- (D);                 
    \draw (B) -- (D);

    \fill (A)   circle (2.5pt);
    \fill (B)   circle (2.5pt);
    \fill (C) circle (2.5pt);
    \fill (D)  circle (2.5pt);

    \path (A) -- (B) node[midway, left] {$1$};
    \path (C) -- (D) node[midway, below left] {$\delta_2 \qquad{}$};
    \path (A) -- (D) node[midway, above right] {$\delta_3$};

  \end{tikzpicture}
  \caption{$\delta_2,\delta_3,\delta_3$}
\label{triangle--}
  \end{minipage}
\end{figure}

\end{proof}

We close the section with the explanation of the Riesz energy and its discrete version. 

\begin{remark}\label{rem_Riesz_mag}\rm 
The {\em discrete Riesz energy function} of a finite metric space $X=\{P_1,\dots,P_n\}$ is given by 
\[
B_X(z)=\sum_{i\ne j}d(P_i,P_j)^z \qquad (z\in \CC).
\]
It is equivalent to the distance multiset of $X$, $\mathcal{D}=[d_1,\dots,d_N]$, where $N={n\choose2}$, as follows. 
$B_X(z)$ is given by $\mathcal{D}$ by $B_X(z)=\sum_{1\le i\le N}{d_i}^z$. 
On the contrary, from $B_X(0)$ we obtain the number of points $n$, and from $B_X(1), \dots, B_X(N)$, where $N={n\choose2}$, we obtain all the elementary symmetric polynomials in $d_1,\dots,d_N$, from which we can obtain $\mathcal{D}$. 

The discrete Riesz energy function is a discrete version of the {\em Riesz energy function} of a compact manifold $X$ that is given as follows. 
Consider a map 
\[
\CC\ni z\mapsto \iint_{X\times X} {d(x,y)}^z\, \d\mu_x \d\mu_y \in\CC,
\]
where $\d\mu$ stands for a Lebesgue measure on $X$. 
It is a well-defined holomorphic function of $z$ on $\R z>-\dim X$. 
Extend the domain by analytic continuation to obtain a meromorphic function with only simple poles. We call it the {\em Riesz energy function} or the {\em Brylinski beta function} and denote it by $B_X(z)$ (\cite{Bry,FV,OS2}). 
\end{remark}

\section{Identification by magnitude}

\subsection{Definition of magnitude and magnitude homology}

The magnitude is a numerical invariant for a metric space introduced by Leinster \cite{L13}. 
Let $(X,d)$ be a finte metric space, $X=\{P_1,\dots, P_n\}$. 
The matrix $Z_X=\left(\exp(-d(P_i,P_j)\right)_{i,j}$ is called the {\em similarity matrix} of $X$. 
A column vector $w=\!{}^t(w_1,\dots,w_n)$ is the {\em weighting} of $X$ if $Z_Xw=\vect 1$, where $\vect{1}=\!{}^t(1,\dots,1)$. When there is a weighting the {\em magnitude} of $X$ is given by $|X|=w_1+\dots+w_n$. 
When $Z_X$ is invertible the magnitude $|X|$ is given by the sum of all the entries of ${Z_X}^{-1}$; $|X|={}^t\vect{1}{Z_X}^{-1}\vect{1}$. 

Let $tX$ $(t>0)$ denote the metric space $(X,td)$. 
The {\em magnitude function} is the partially defined function $M_X(t)=|tX|$ defined for such $t$ that $tX$ has magnitude (\cite{L13}). 
For any finite metric space $X$ the similarity matrix of $tX$ is invertible for all but finitely many $t>0$ (\cite{L13} Proposition 2.2.6 i). 
By putting $q=e^{-t}$ the similarity matrix of $tX$ is expressed by $z_X(q)=\left(q^{d_{ij}}\right)_{i,j}$. 
The determinant, considered as a {\sl generalized polynomial} which is a polynomial that allow positive non-integer exponents, is invertible in the field of generalized rational functions since it has the constant term $1$. 
The sum of all the entries of $z_X(q)^{-1}$ is called the {\em formal magnitude} (\cite{L17}), and denoted by $m_X(q)$. 

Magnitude homology $MH^l_\ast(X)$ is defined for graphs by Hepworth and Willerton \cite{HW} and for enriched categories by Leinster and Shulman \cite{LS}. 
We follow Hepworth and Willerton. 
For $k,l\ge0$ let $\mathcal{P}_k^l(X)$ be the set of $k$-paths of $X$ with length $l$;
\begin{equation}\label{Pkl}
\mathcal{P}_k^l(X)=\left\{\gamma=(x_0,x_1,\dots,x_k)\in X^{k+1}\,|\,
x_0\ne x_1\ne \dots \ne x_k, \, \ell(\gamma)
=l
\right\}.
\end{equation}
The magnitude chain complex is defined by $MC_k^l(X)=\ZZ \mathcal{P}_k^l(X)$ for $k,l\ge0$, 
and the boundary operator (differential) $\partial\colon MC_k^l(X)\to MC_{k-1}^l(X)$ is defined by 
\[
\partial(x_0,\dots,x_k)=\sum_{i=1}^{k-1}(-1)^i\partial_i(x_0,\dots,x_k),
\]
where
\[
\partial_i(x_0,\dots,x_k)=\left\{
\begin{array}{ll}
(x_0,\dots,\hat x_i,\dots,x_k)  & \quad \mbox{if} \>\>\> \ell(x_0,\dots,\hat x_i,\dots,x_k)=\ell(x_0,\dots,x_k), \\
0 & \quad \mbox{otherwise}.
\end{array}
\right.
\]
The {\em magnitude homology} is defined by $MH_k^l(X)=H_k(MC_\ast^l(X))$. 
Then formal magnitude can be considered as the Euler characteristic of the magnitude homology as 
\[
m_X(q)=\sum_{k,l\ge0}(-1)^k\,{\rm rank}\,(MH^l_k(X))\cdot q^l
\]
(\cite{HW} Theorem 2.8). 

\subsection{Definition of isomers}

Recall that a metric space $X$ is called to be {\em homogeneous} if for any $x,y\in X$ there is a self-isometry of $X$ that maps $x$ to $y$. 

\begin{definition} \rm 
\begin{enumerate}
\item Let $X=\{P_1,\dots,P_{n}\}$ be a metric space. 
If there is a multiset $[d_1, \dots, d_{n-1}]$, where $d_i>0$ for any $i$, such that the multiset $[d(P_i,P_1),\dots,d(P_i,P_n)]$ is equal to $[0,d_1, \dots, d_{n-1}]$ for any $i$ ($1\le i\le n$), then we say $X$ is {\em quasi-homogeneous of type} $[d_1, \dots, d_{n-1}]$. 
\item Two finite metric spaces are called {\em isomers} if they are quasi-homogeneous of the same type but not congruent\footnote{We use this terminology, by analogy with chemistry, where objects share the same invariants but differ in structure.}.
\end{enumerate}
\end{definition}

A circular space is homogeneous, and a homogeneous space is quasi-homogeneous. 

\begin{proposition}\label{prop_isomer_eg}
The $n$-point curcular space $X$ for even $n\ge6$ and its homometric space $X'$ constructed in Theorem \ref{thm_homometric}, the pairs $R_n$, $R_n'$ for odd $n\ge9$ in Proposition \ref{isomer_Rn_odd} (1), $R_9$, $R_9'$ in Proposition \ref{isomer_Rn_odd} (2), and the regular hexagon and its homometric space in Remark \ref{rm_6-isomer_7_embed} (1) provide examples of isomers. 
\end{proposition}

\begin{proposition}\label{prop_isomer_mag}
Isomers have the same distance multiset and formal magnitude. 
\end{proposition}

\begin{proof}
Suppose $X$ and $X'$ are quasi-homogeneous of type $[d_1, \dots, d_{n-1}]$. Put $d_0=0$. 

The first statement is trivial since both spaces have $n[d_1, \dots, d_{n-1}]$ as the distance multisets. 

As for the formal magnitude, since $w=\!{}^t(w_0, \dots, w_0)$, where $w_0=\left(\sum_{j=0}^{n-1}q^{d_j}\right)^{-1}$, is the weighing for $X$ and $X'$, we have  
\begin{equation}\label{Speyer}
m_{X}(q)=m_{X'}(q)=\frac{n}{\sum_{j=0}^{n-1}q^{d_j}}.
\end{equation}
We remark that this equality was shown by Speyer (\cite{L13} Proposition 2.1.5) in the homogeneous case. 
\end{proof}

We give non-existence statement of isomers of cycle graphs. 

\begin{lemma}\label{lem_isometric}
Let $(X,d)$ and $(X,d')$ be finite metric spaces with the same base space $X$ that are both quasi-homogeneous of the same type. 
If $d(x,y)\le d'(x,y)$ for any $x$ and $y$ in $X$ then $(X,d)$ and $(X,d')$ are isometric. 
\end{lemma}

\begin{proposition}\label{non-exists_isomer_cycle}
The $n$-cycle graph does not have an isomer if $n$ is an odd number greater than or equal to $7$. 
\end{proposition}

\begin{proof}
Suppose $(X,d)$ is quasi-homogeneous of type $[1,1,2,2,\dots, k,k]$. 
Since for each point of $X$ there are two points with distance $1$, any point of $X$ belongs to a cycle consisting of three or more edges of length $1$, which we simply call a {\sl cycle} in this proof. 
Note that there are $n$ pairs of points with distance $k$. 

Case 1. Suppose there is only one such cycle. 
We may assume that $X=\ZZ_n$ and $d(x,y)=1$ if $|x-y|_n=1$. 
Then the triangle inequality implies that $d(x,y)\le|x-y|_n$ for any $x,y\in\ZZ_n$, which implies that $(X,d)$ is isometric to $C_n$ by Lemma \ref{lem_isometric}. 

Case 2. Suppose there are exactly two such cycles. 
Let the numer of vertices be $m_1$ and $m_2$ $(m_1<m_2)$. 
Since $m_1\ge3$ there holds $m_1<m_2\le 2k-2$, which implies that in each cycle there is no pair of points with distance $k$. 
On the other hand, since each point has exactly two points with distance $k$ and since $m_1\le k$ because $m_1+m_2=2k+1$, there are at most $2m_1$ pairs of points with distance $k$, which is a contradiction as $2m_1<n$. 

Case 3. Suppose there are $p$ such cycles $L_1, \dots, L_p$, where $p\ge3$. 
Let $m_i$ be the number of vertices of $L_i$. 
Remark that there are no cycles that have a pair of points with distance $k$ since each $m_i$ is smaller than $2k$. 

(i) Assume there are no triple of points $x,y,z$ such that $d(x,y)=d(y,z)=k$ and that $x,y$ and $z$ belong to mutually distinct cycles. 
Let $a_{ij}$ $(1\le i,j\le p)$ be the number of points $x$ in $L_i$ such that there is a point $y$ in $L_j$ with $d(x,y)=k$. 
Then $a_{ii}=0$, $a_{ji}=a_{ij}$ and $m_i=\sum_{j\ne i}a_{ij}$ for any $i$ $(1\le i\le p)$. 
The number of pairs of points with distance $k$ is given by $2\sum_{i<j}a_{ij}$, which is equal to $\sum_{i=1}^pm_i=n$, which contradicts to $n$ being odd. 

(ii) Assume there is a triple of points $x,y,z$ such that $d(x,y)=d(y,z)=k$ and that $x,y$ and $z$ belong to mutually distinct cycles, say $L_1, L_2$ and $L_3$. 
Let $x^-,x^+$ (or $z^-,z^+$) be points with distance $1$ from $x$ (or respectively, $z$). 
Then $x^-,x,x^+,z^-,z,z^+$ are mutually distinct since $x^-,x,x^+\in L_1$ and $z^-,z,z^+\in L_3$. 
The triangle inequality implies that $d(y,x^\pm), d(y,z^\pm)\ge k-1$. 
It follows that there are at least six points with distance not smaller than $k-1$ from $y$, which contradicts the assumption that $X$ is of type $[1,1,\dots, k-1,k-1,k,k]$. 
\end{proof}

\subsection{Identification by formal magnitude}

%
Speyer's formula \eqref{Speyer} shows that the formal magnitude of a circular space of type $(d_1,\dots, d_{\lfloor n/2 \rfloor})$ is given by  
\begin{equation}\label{fmrp}
m_{X}(q)=\left\{
\begin{array}{ll}
\displaystyle \frac{n}{1+2q^{d_1}+2q^{d_2}+\dots+2q^{d_{(n-1)/2}}} &\quad \mbox{if $n$ is odd,}\\[4mm]
\displaystyle \frac{n}{1+2q^{d_1}+2q^{d_2}+\dots+2q^{d_{n/2-1}}+q^{d_{n/2}}} &\quad \mbox{if $n$ is even}.
\end{array}
\right.
\end{equation}

\begin{theorem}\label{thm_chara_magnitude}
Assume $n\ge4$. 
\begin{enumerate}
\item The regular $n$-gon can be identified by the formal magnitude if and only if $n=4,5$ or $7$. 
\item If $n=4$ or $5$, the $n$-cycle graph can be identified by the formal magnitude. 
\item If $n$ is even and greater than or equal to $6$, then the $n$-cycle graph cannot be identified by the formal magnitude. 
\end{enumerate}
\end{theorem}

\begin{proof}
(1) 
``Only if'' part 
is the consequence of Propositions \ref{prop_isomer_eg} and \ref{prop_isomer_mag}.

\smallskip
``If'' part. 
The case 
$n=4$ is the consequence of Theorem 2.4, 4 of \cite{O24} since the length of the diagonal of a square is less than twice the length of the sides. 

\smallskip

Figure \ref{five_pts} illustrates all the possible configurations for graphs with five vertices and five edges. 
Among these configurations, the number of {\em unoriented} open $2$-paths is greater than five except for the pentagon. 
\begin{figure}[htbp]
\begin{center}
\includegraphics[width=14cm]{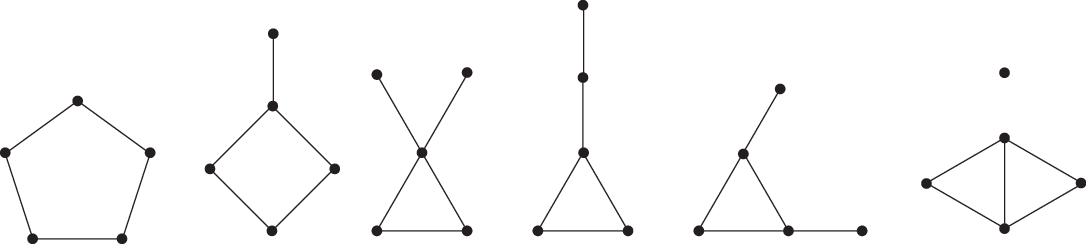}
\caption{All the possible configuration of five points and five edges with a cycle (cycles)}
\label{five_pts}
\end{center}
\end{figure}
This observation is generalized as the following lemma.

\newcommand{\degree}{\mathop{\mathrm{deg}}\nolimits}

\begin{lemma}\label{lemma_valency}
Suppose $G=(V,E)$ is a simple graph with $n$ vertices and $n$ edges. Then number of non-oriented open 2-paths in $G$ is greater than or equal to $n$,
and equality holds if and only if the valencies of all vetices are equal to two.
\end{lemma}

\begin{proof}
We denote the valency of $v \in V$ by $\degree v$. Then $\sum_{v\in V} \degree v = 2 \# E =2n$.
Open 2-paths have its midpoints, and each vertex $v$ can be a midpoints of $\binom{\degree v}{2}$ open 2-paths.
Therefore, the number of open 2-paths in $G$ is $\sum_{v\in V} \binom{\degree v}{2}$.
Since the function $\binom{s}{2}=\frac{s(s-1)}{2}$ is strongly convex, $\sum_{v\in V} \binom{\degree v}{2}$ gets its minimum
only if $\degree v = 2$ for all $v$, and the minimum value is $\sum_{v\in V} \binom{\degree v}{2} = n$.
\end{proof}

Case $n=5$. 
Put $b=\de_2=\big(1+\sqrt5\,\big)/2$. Note that $1$ and $b$ are rationally independent. 
Assume a finite metric space $X$ has the same formal magnitude as the regular pentagon $\triangle_5$. Then by \eqref{fmrp} we have 
\[
m_X(q)=5-10q-10q^b+20q^2+40q^{1+b}+20q^{2q}+\dots,
\]
which implies that $\#X=5$ and that $X$ has five edges of length $1$ and another five edges of length $b$. 
It follows that $X$ has ten {\em closed} $2$-paths of length $2=1+1$ and another ten {\em closed} $2$-paths of length $2b=b+b$. From the coefficients of $q^2$ and $q^{2b}$ we know that $X$ has ten {\em open} $2$-paths of length $2=1+1$ and another ten {\em open} $2$-paths of length $2b=b+b$.

First consider only edges of length $1$. From Lemma \ref{lemma_valency}, all the valencies are two, so they form a pentagon.
The same statement holds for length $b$ edges. (The pentagon with side length $b$ should be star-shaped.) 
Therefore $X$ is isometric to the regular pentagon $\triangle_5$.

\smallskip
Case $n=7$. 
Put $b=\de_2\approx 1.80$ and $d=\de_3\approx 2.24$. 
Assume a finite metric space $X$ has the same formal magnitude as the regular septagon $\triangle_7$. Then by \eqref{fmrp} we have 
\[
\begin{array}{rcl}
m_X(q)&=&\displaystyle 7-14\left(q+q^b+q^d\right) \\[2mm]
&&+28\left(q^2+q^{2b}+q^{2d}\right)+56\left(q^{1+b}+q^{1+d}+q^{b+d}\right)+\dots,
\end{array}
\]
which implies that $\#X=7$. 
Since 
\[
c_1+c_2b+c_3d=c_1'+c_2'b+c_3'd, \quad c_1,\dots, c_3'\in\NN\cup\{0\}, \> c_1'+c_2'+c_3'\le2,
\]
implies $c_1=c_1', c_2=c_2'$ and $c_3=c_3'$, the coefficients 
of $m_X(q)$ show that $X$ has seven edges of length $1$, seven edges of length $b$ and seven edges of length $d$ and that the number of unoriented open $2$-paths of length $2$, $2b$ and $2d$ are all seven. 

First consider only edges of length $1$. 
Since the numbers of the vertices, length $1$ edges and length $2$ unoriented open $2$-paths are all seven. 
By Lemma \ref{lemma_valency}, the possible configuration of length $1$ edges is either a septagon or a disjoint union of a triangle and a square. 
Next consider the length $d$ edges. The same argument shows that the possible configuration of length $d$ edges is either a septagon or a disjoint union of a triangle and a square. 

Assume the configuration of length $1$ edges is a disjoint union of a triangle $\triangle$ and a square $\square$. 
Since a triangle with side lengths $1,1$ and $d$ does not satisfy the triangle inequality, any length $d$ edge can only connect a vertex of $\triangle$ to a vertex of $\square$. Then it is impossible to form a septagon or a triangle with length $d$ edges, which is a contradiction. 

Therefore the only possible configuration of length $1$ edges is a septagon. 
Let it be $P_0P_1\dots P_6$, where the points are assumed to be in this order. 
The triangle inequality indicates that any length $d$ edge connects $P_i$ and $P_{i\pm 3}$, where the subscripts are considered modulo $7$. 
It follows that the configuration of length $d$ edges is a (star-shaped) septagon. 
Joining the remaining pairs of vertices by length $b$ edges, we obtain a regular septagon $\triangle_7$, which completes the proof of (1). 

\smallskip
(2) Assume a finite metric space $X$ has the same formal magnitude as the $4$-cycle graph $C_4$. Then 
\begin{equation}\label{mxc4}
m_X(q)=4-8q+12q^2-16q^3+\dots,
\end{equation}
which implies that $\#X=4$ and that $X$ has four edges of length $1$. 
The only possible configuration of length $1$ edges is one of the two illustrated in Figure \ref{four_pts}. 
In either case, the lengths $\ell$ of the remaining two edges are both $2$ since from \eqref{mxc4} we know $2\le\ell$, whereas the triangle inequality implies $\ell\le2$. Then the formal magnitude of the right of Figure \ref{four_pts} is equal to $4-8q+14q^2+\dots$, which is not equal to $m_{C_4}(q)$. 
Remark that the difference of the coefficients of $q^2$ is the consequence of the difference of the number of open $2$-paths. 

The case when $n=5$ can be proved in the same way by Figure \ref{five_pts} from $\displaystyle m_{C_5}(q)=5-10q+10q^2+\dots$. 

\smallskip
(3) is the consequence of Proposition \ref{prop_isomer_eg} and Lemma \ref{lem_small_m-gon}. 
\end{proof}

We remark that the edge lengths $\de_1, \dots, \de_{\lfloor n/2\rfloor}$ of the regular $n$-gon given by \eqref{delta} are not always rationally independent. 
For example, when $n=3k$ $(k\in\NN)$ since 
\[
\sin\frac{(k+j)\pi}{3k}-\sin\frac{(k-j)\pi}{3k}=2\cos\frac\pi3\sin \frac{j\pi}{3k}=\sin \frac{j\pi}{3k},
\]
we have 
\[
\de_j+\de_{k-j}=\de_{k+j} \quad\mbox{ if } \quad  1\le j\le \left\lfloor\frac{k-1}2\right\rfloor.
\]
This makes it difficult to determine the distance multiset, i.e. the multiset of edge lengths 
and what kind and how many paths there are from the coefficients of the formal magnitude. 

\subsection{Identification by magnitude homology}

\begin{theorem}
\begin{enumerate}
\item 
The regular $n$-gon $R_n$ and the isomer $R_n'$ constructed in Theorem \ref{thm_homometric} for even $n\ge6$ and \ref{isomer_Rn_odd} (1) for odd $n\ge9$ have isomorphic magnitude homology groups. 
The $n$ cycle graph $C_n$ and the isomer $C_n'$ constructed in Theorem \ref{thm_homometric} for even $n\ge6$ have non-isomorphic magnitude homology groups.
\end{enumerate}
\end{theorem}

\begin{proof}
(1) Case 1. Suppose $n$ is an even number greater than or equal to $6$. 

First we give a bijection $\varphi_k^l\colon\mathcal{P}_k^l(R_n)\to\mathcal{P}_k^l(R_n')$. 
Let $\gamma=(P_{i_0},P_{i_1},\dots,P_{i_k})$ be a $k$-path in $R_n$ with length $l$. 
Suppose 
\[\varphi_k^l(\gamma)=\left(Q^0_{j_0}, Q^1_{j_1}, \dots, Q^k_{j_k}\right), \quad Q^m=A \>\>\mbox{ or } \>\>B, \>\> 0\le j_m\le n/2-1 \quad (0\le m\le k). 
\]
We define $j_m$ and $Q^m$ inductively. 

(i) For $m=0$, put 
\[\left\{\begin{array}{lll}
j_0=i_0, &\>\> Q^0=A &\quad \mbox{if} \quad 0\le i_0\le n/2-1, \\[2mm]
j_0=i_0-n/2, &\>\> Q^0=B &\quad \mbox{if} \quad n/2\le i_0\le n-1. 
\end{array}\right.\]

(ii) Assume $j_\mu$ and $Q^\mu$ are determined for $0\le\mu\le m-1$. We define $j_m$ and $Q^m$ as follows. 

Case 1. Suppose $n/2$ is even, say $n=4p$ $(p\ge2)$. Then 
\[\left\{\begin{array}{lll}
j_m=j_{m-1}+(i_m-i_{m-1}) \quad{\rm mod} \> 2p, & \>\> Q^m=Q^{m-1}  & \quad \mbox{if} \quad 1\le i_m-i_{m-1}\le p \quad{\rm mod} \> n \>\>\mbox{ or } \\[1mm]
&& \quad \phantom{\mbox{if}} \quad  3p+1\le i_m-i_{m-1}\le 4p-1 \quad{\rm mod} \> n, \\[2mm]
j_m=j_{m-1}+(i_m-i_{m-1})-k \quad{\rm mod} \> 2p, & \>\> Q^m\ne Q^{m-1}  & \quad \mbox{if} \quad p+1\le i_m-i_{m-1}\le 3p \quad{\rm mod} \> n. 
\end{array}\right.\]

Case 2. Suppose $n/2$ is odd, say $n=pk+2$ $(p\ge1)$. Then 
\[\left\{\begin{array}{lll}
j_m=j_{m-1}+(i_m-i_{m-1}) \quad{\rm mod} \> 2p+1, & \>\> Q^m=Q^{m-1}  & \quad \mbox{if} \quad 1\le i_m-i_{m-1}\le p \quad{\rm mod} \> n \>\>\mbox{ or } \\[1mm]
&& \quad \phantom{\mbox{if}} \quad  3p+2\le i_m-i_{m-1}\le 4p+1 \quad{\rm mod} \> n, \\[2mm]
j_m=j_{m-1}+(i_m-i_{m-1}) \quad{\rm mod} \> 2p+1, & \>\> Q^m\ne Q^{m-1}  & \quad \mbox{if} \quad p+1\le i_m-i_{m-1}\le 3p+1 \quad{\rm mod} \> n. 
\end{array}\right.\]

The bijection $\varphi_k^l$ can be extended to an isomorphism from $MC^l_k(R_n)$ to $MC^l_k(R_n')$, which we denote by the same letter $\varphi_k^l$. 

\smallskip
Next we show that $\varphi_k^l$ induces an isomorphism between the magnitude homology groups, $MH^l_k(R_n)$ and $MH^l_k(R_n')$. 
The proofs of Lemma \ref{lem_small_m-gon} and Theorem \ref{thm_homometric} imply that the triangle inequalities that appear in the isomer $R_n'$ comes from the circular triangle inequalities \eqref{triangle_inequality_delta} of $\de_i$'s. 
Since any triangle inequality that appears in $R_n$ is strict, the same assertion holds for the $R_n'$. 
Therefore all the boundary operators $\partial$ are zero maps, therefore ${\rm Ker}\>\partial^l_k=MC^l_k(R_n)\cong MC^l_k(R_n')={\rm Ker}\>{\partial^l_k}'$, which implies ${\varphi_k^l}_\ast\colon MH^l_k(R_n) \stackrel{\cong}{\to}  MH^l_k(R_n')$.

\smallskip
Case 2. Suppose $n$ is an odd number greater than or equal to $11$. Let $n=2p+1$. 

Define a bijection $\psi_k^l\colon\mathcal{P}_k^l(R_n)\to\mathcal{P}_k^l(R_n')$ by 
\[
\psi_k^l\colon (P_{i_0},P_{i_1},\dots,P_{i_k})\mapsto(P_{i_0'}',P_{i_1'}',\dots,P_{i_k'}'),
\]
where $i_0'=i_0$ and $i_m'$ $(m\ge1)$ is defined inductively by the following. 
\begin{enumerate}
\item[(i)] $i_m'-i_{m-1}' \>\> ({\rm mod}. \>\> n)\ge k+1$ if and only if $i_m-i_{m-1} \>\> ({\rm mod}. \>\> n)\ge k+1$, 
\item[(ii)] $\displaystyle 
|i_m'-i_{m-1}'|_n=\begin{cases}
|i_m-i_{m-1}|_n &\quad \mbox{if } \>\>\> |i_m-i_{m-1}|_n\le k-2, \\
k &\quad \mbox{if } \>\>\> |i_m-i_{m-1}|_n= k-1, \\
k-1 &\quad \mbox{if } \>\>\> |i_m-i_{m-1}|_n= k. \\
\end{cases}
$
\end{enumerate}
The rest can be proved in the same way as in Case 1. 
The strict triangle inequality follows from Lemma \ref{lem_de_1_de_k-2}. 

\smallskip
(2) Let $C_A$ and $C_B$ denote $X_A'$ and $X_B'$ in the proof of Theorem \ref{thm_homometric}. 
Put $l_0=\lceil n/4 \rceil$. Then $l_0={\rm dist}(C_A,C_B)$. 
There are $n/2$ edges joining $C_A$ and $C_B$ with length $l_0$, $\{A_iB_i\}_{i=0}^{n/2-1}$, if $n/2$ is even, and $n$ edges joining $C_A$ and $C_B$ with length $l_0$, $\{A_iB_{i+p}, A_iB_{i+p+1}\}_{i=0}^{n/2-1}$, where $p=(n-2)/4$ and subscripts are considered modulo $n/2$, if $n/2$ is odd. 
It follows that 
\[
MH^{l_0}_1(C_n')\cong\left\{\begin{array}{ll}
\ZZ^{n} & \quad \mbox{if $n/2$ is even}, \\[1mm]
\ZZ^{2n} & \quad \mbox{if $n/2$ is odd}, 
\end{array}
\right.
\]
whereas $MH^{l_0}_1(C_n)\cong 0$ for any even $n$ with $n\ge6$. 

\end{proof}

\begin{remark}\rm 
The proof of (1) does not work for $n=9$ as stated. 
In $R_9$, all the triangle inequalities are strict, whereas its isomer $R'_9$, whether constructed as in Proposition \ref{isomer_Rn_odd} (1) or (2), 
contains a triangle with side lengths $\de_1, \de_2$ and $\de_4$ satisfying $\de_1+\de_2=\de_4$. 
Consequently, when computing the magnitude homology of $R'_9$, a nonzero operator $\partial$ appears, hence the argument of the proof of (1) no longer applies as it stands. 

Incidentally, every isomer of $R_9$ necessarily contains a triagnle with side lengths $\de_1, \de_2$ and $\de_4$. 
This is because if, in addition to the triangle inequalities, we were to forbid the triple $(\de_1, \de_2, \de_4)$, the resulting condition would be equivalent to the one that an isomer of $C_9$ must satisfy, although no isomer of $C_9$ exists as shown in Proposition \ref{non-exists_isomer_cycle}. 
\end{remark}

Gu gave an example of pairs of {\sl graphs} with the same magnitude and non-isomorphic magnitude homology with $16$ (and $20$) vertices (\cite{G} Appendix A). 
$C_6$ and $C_6'$ give a pair of metric spaces with the same property with smaller number of vertices by removing the condition that the spaces be graphs.

\noindent
Hiroki Kodama

\noindent
International Institute for Sustainability with Knotted Chiral Meta
Matter, Hiroshima University,

\noindent
1-3-1 Kagamiyama, Higashi-Hiroshima City, Hiroshima, 739-8531, Japan.

\noindent
Center for Interdisciplinary Theoretical and Mathematical Sciences, RIKEN,

\noindent
2-1 Hirosawa, Wako-shi, Saitama, 351-0198, Japan.

\noindent
E-mail: kodamahiroki@gmail.com

\bigskip

\noindent
Jun O'Hara

\noindent
Department of Mathematics and Informatics, Faculty of Science, 
Chiba University

\noindent
1-33 Yayoi-cho, Inage, Chiba, 263-8522, JAPAN.  

\noindent
E-mail: ohara@math.s.chiba-u.ac.jp

\end{document}